\theoremstyle{plain}
\newtheorem{theorem}{Theorem}[section]
\newtheorem{corollary}[theorem]{Corollary}
\newtheorem{proposition}[theorem]{Proposition}
\newtheorem{lemma}[theorem]{Lemma}
\theoremstyle{definition}
\newtheorem{definition}[theorem]{Definition}
\newtheorem{example}[theorem]{Example}
\newtheorem{remark}[theorem]{Remark}
\theoremstyle{remark}
\numberwithin{equation}{section}
\numberwithin{table}{section}
\DeclareMathOperator{\Aut}{Aut}
\begin{document}
\title{Maps preserving the sum-to-difference ratio in characteristic $p$}
\date{\today}

\author{Sunil Chebolu}
\address[S.~Chebolu]{Department of Mathematics, Illinois State University,
Normal, IL 61761, USA}
\email{\tt schebol@ilstu.edu}

\author{Apoorva Khare}
\address[A.~Khare]{Department of Mathematics, Indian Institute of
Science, Bangalore 560012, India}
\email{\tt khare@iisc.ac.in}

\author{Anindya Sen}
\address[A.~Sen]{Accountancy \& Finance Department, 
University of Otago, Dunedin 9016, NZ}
\email{\tt anindya.sen@otago.ac.nz}

\subjclass[2020]{11T06, 
12E12, 
12F05 (primary); 
12E20 (secondary)} 

\keywords{SD-maps;
Cauchy functional equation;
finite fields;
algebraic extensions}

\begin{abstract}
Given a field $\mathbb{F}$, we introduce a novel group $SD(\mathbb{F})$
of its self-maps: the solutions $f \colon \mathbb{F} \twoheadrightarrow
\mathbb{F}$ to the functional equation $f \left( (x+y)/(x-y) \right) =
(f(x) + f(y))/(f(x) - f(y))$ for all $ x \neq y$ in $\mathbb{F}$. We
compute this group for all fields algebraic over $\mathbb{F}_p$. In
particular, this group distinguishes $\mathbb{F}_5$ among all finite
fields $\mathbb{F}_q$, and in fact among all subfields of
$\overline{\mathbb{F}_q}$.
\end{abstract}

\thanks{A.K.\ was partially supported by a Shanti Swarup Bhatnagar Award
from CSIR (Govt.\ of India).}

\maketitle

\section{Introduction}

One of the most fundamental and famous functional equations is named
after Cauchy:
\begin{equation}\label{Ecauchy}
f : \mathbb{R} \to \mathbb{R}, \qquad f(x+y) = f(x) + f(y).
\end{equation}

This has been much studied in the literature, e.g.\ ``back-to-back'' by
Sierpi\'nsky~\cite{Sierpinsky} and Banach~\cite{Banach} in 1920; by now
there are several monographs as well, e.g.\
\cite{Aczel-Dhombres,Czerwik}. It is easy to check that~\eqref{Ecauchy}
implies $f$ is $\mathbb{Q}$-linear -- and hence linear if $f$ is
continuous. However, without continuity one arrives at non-measurable,
wild solutions -- for instance, the graph of every discontinuous map $f$
satisfying~\eqref{Ecauchy} is dense in $\mathbb{R}^2$.

A slight modification of the Cauchy functional equation is
\[
f(x-y) = f(x)-f(y), \qquad \forall x,y \in \mathbb{R},
\]
and it is a short exercise to check that the solutions of this coincide
with those of~\eqref{Ecauchy}.

This led us to a natural question: what if we combine both properties in
one functional equation? Namely, classify all maps $f : \mathbb{R} \to
\mathbb{R}$ such that
\begin{equation}\label{Esumdiff}
f \left( \frac{x+y}{x-y} \right) = \frac{f(x) + f(y)}{f(x) - f(y)},
\qquad \forall x \neq y.
\end{equation}

A first observation is that $f(y) \equiv y$ is a solution
of~\eqref{Esumdiff}, but $f(y) = cy$ is not for any $c \neq 1$, so the
problem is different from that of solving the Cauchy functional
equation~\eqref{Ecauchy}. Nevertheless, one can show that $f(y) \equiv y$
is the only solution to~\eqref{Esumdiff}.

We next observe that
(a)~equation~\eqref{Esumdiff} makes sense over any field $\mathbb{F}$,
not just $\mathbb{R}$; and
(b)~every field automorphism $f : \mathbb{F} \to \mathbb{F}$
satisfies~\eqref{Esumdiff}.
Thus, in recent work~\cite{CKS} we initiated the study
of~\eqref{Esumdiff} -- over fields of characteristic zero. We showed that
for $\mathbb{F} = \mathbb{Q}$ or $\mathbb{R}$, the only solution
to~\eqref{Esumdiff} is $f(y) \equiv y$. Moreover, for arbitrary subfields
$\mathbb{F}' \subseteq \mathbb{R}$, the only continuous self-maps of
$\mathbb{F} = \mathbb{F}'$ or $\mathbb{F}'(i)$ that
satisfy~\eqref{Esumdiff} are $f(z) \equiv z$ or $\overline{z}$.
These and other results shown in~\cite{CKS} are summarized in
Table~\ref{Ttable1}. In all of these cases (with $\mathbb{F} =
\mathbb{Q}_p$ in the last row), it is remarkable that~\eqref{Esumdiff}
\textit{guarantees the surjectivity of $f$ on $\mathbb{F}$}.

\begin{table}[ht]
\begin{tabular}{| c | c | c | c | c |}
\hline
$\#$ & \textit{Domain} & \textit{Codomain} & \textit{Hypotheses} &
\textit{Solution set}\\
\hline\hline

1 & $\mathbb{Q}$ & $\mathbb{C}$ & & ${\rm id}$\\
\hline
2 & $\mathbb{F} \subseteq \mathbb{R}$ & $\mathbb{R}$ &
$\mathbb{F} \cap (0,\infty)$ closed under $\sqrt{\cdot}$ &
{\rm id}\\
\hline
3 & $\mathbb{F}' \subseteq \mathbb{R}$ & $\mathbb{R}$ &
$f$ continuous & {\rm id}\\
\hline\hline

4 & $\mathbb{Q}(i)$ & $\mathbb{C}$ & & $z$ or $\overline{z}$\\
\hline
5 & $\mathbb{F}(i), \ \mathbb{F} \subseteq \mathbb{R}$ & $\mathbb{C}$ &
$\mathbb{F} \cap (0,\infty)$ closed under $\sqrt{\cdot}$, &
$z$ or $\overline{z}$\\
& & & $f(\mathbb{F}) \subseteq \mathbb{R}$ & \\
\hline
6 & $\mathbb{F}'(i), \ \mathbb{F}' \subseteq \mathbb{R}$ & $\mathbb{C}$ &
$f$ continuous &
$z$ or $\overline{z}$\\
\hline\hline

7 & $\mathbb{F} \subseteq \mathbb{Q}_p$, $p \geq 2$ & $\mathbb{Q}_p$ &
$f$ continuous & ${\rm id}$\\
\hline

\end{tabular}\vspace*{1mm}
\caption{Summary of results in~\cite{CKS}, on solutions $f$
to~\eqref{Esumdiff} in characteristic zero}\label{Ttable1}
\end{table}

One upshot of this table is rows~$5,6$, which yield that
(a)~\textit{the $\mathbb{R}$-preserving automorphisms of $\mathbb{C}$ are
precisely the same as the $\mathbb{R}$-preserving solutions
to~\eqref{Esumdiff}}; and
(b)~\textit{the continuous automorphisms of $\mathbb{C}$ are precisely
the same as the continuous solutions to~\eqref{Esumdiff}.}
This suggests that the solutions to~\eqref{Esumdiff} parallel the
automorphism group (under composition). In turn, this motivates us to
introduce the following notion.

\begin{definition}
Given a field $\mathbb{F}$, define the \textit{SD-group}\footnote{``SD''
stands for sum-difference, in the absence of a better name.} to be
\[
SD(\mathbb{F}) := \{ f : \mathbb{F} \to \mathbb{F} \ | \ f \textit{ is
onto, and satisfies } \eqref{Esumdiff} \},
\]
and the maps satisfying~\eqref{Esumdiff} (but not necessarily surjective)
to be \textit{SD-maps}.
\end{definition}

\noindent To the best of our knowledge, $SD(\mathbb{F})$ is a novel
construction.

We thus specialize the table (by equating the codomain to the domain)
into a summary.

\begin{theorem}
For any field $\mathbb{F}$, $SD(\mathbb{F})$ is a group (under
composition) containing $\Aut(\mathbb{F})$ as a subgroup. Moreover:
\begin{enumerate}
\item $SD(\mathbb{F}) = {\rm Aut}(\mathbb{F})$ is trivial, for
$\mathbb{F} = \mathbb{Q}$ or if $\mathbb{F}$ is a real subfield with
$\mathbb{F} \cap (0,\infty)$ closed under square roots.\footnote{This
includes the reals, the real algebraic numbers $\mathbb{R} \cap
\overline{\mathbb{Q}}$, and the real constructible numbers -- via ruler
and compass, or via taking iterated square roots. We also quickly show
that if $\mathbb{F} \cap (0,1)$ is closed under $\sqrt{\cdot}$ then
$\varphi \equiv {\rm id}_{\mathbb{F}}$ is the only automorphism.
Indeed, if $x \in \mathbb{F}_+$ then so is $\sqrt{x}$, whence $\varphi(x)
= \varphi(\sqrt{x})^2 \geq 0$. Thus $\varphi$ is order-preserving; now
since it fixes $\mathbb{Q}$ which is dense in $\mathbb{F}$ in the order
topology of $\mathbb{R}$, it also fixes $\mathbb{F}$.}
For such fields, we also have $SD(\mathbb{F}(i)) = (1) =
SD(\mathbb{Q}(i))$.

\item Given a topological field $\mathbb{F}$, define
$SD_{cont}(\mathbb{F})$ to be the subgroup of $SD(\mathbb{F})$ consisting
of continuous maps, and similarly, ${\rm Aut}_{cont}(\mathbb{F})$. If
$\mathbb{F}$ is any subfield of $\mathbb{R}$, then:
(a)~$SD_{cont}(\mathbb{F}) = {\rm Aut}_{cont}(\mathbb{F}) = (1)$, and
(b)~$SD_{cont}(\mathbb{F}(i)) = {\rm Aut}_{cont}(\mathbb{F}(i)) =
\mathbb{Z} / 2 \mathbb{Z}$.
\end{enumerate}
\end{theorem}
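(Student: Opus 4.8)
The plan is to derive this from Table~\ref{Ttable1} together with a short formal check. First I would verify the group-theoretic assertions. Visibly $\mathrm{id}_{\mathbb{F}}\in SD(\mathbb{F})$; and any $\sigma\in\Aut(\mathbb{F})$ satisfies~\eqref{Esumdiff}, since it commutes with $+,-,\times$ and with inversion, and it is onto, so $\Aut(\mathbb{F})\subseteq SD(\mathbb{F})$. Every SD-map $f$ is injective, since otherwise the right-hand side of~\eqref{Esumdiff} would be undefined at some pair $x\neq y$; hence a surjective SD-map is a bijection. Closure of $SD(\mathbb{F})$ under composition is a one-line substitution --- apply~\eqref{Esumdiff} for the outer map at the arguments $f(x),f(y)$, which is legitimate because $f$ is injective --- and closure under inverses follows by putting $u=f(x)$, $v=f(y)$ in~\eqref{Esumdiff} and applying $f^{-1}$ to both sides. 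Associativity is automatic, so $SD(\mathbb{F})$ is a group and $\Aut(\mathbb{F})$ a subgroup.

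For part~(1): when $\mathbb{F}=\mathbb{Q}$, row~1 of Table~\ref{Ttable1} says every SD-map $\mathbb{Q}\to\mathbb{C}$ is the identity, so a fortiori $SD(\mathbb{Q})=(1)$, and $\Aut(\mathbb{Q})=(1)$ trivially; when $\mathbb{F}$ is a real subfield with $\mathbb{F}\cap(0,\infty)$ closed under $\sqrt{\cdot}$, row~2 applied to the inclusion $\mathbb{F}\hookrightarrow\mathbb{R}$ gives the same conclusion and $\Aut(\mathbb{F})=(1)$ by the footnote. For $\mathbb{F}(i)$ with such $\mathbb{F}$ (and likewise for $\mathbb{Q}(i)$), complex conjugation $c\colon z\mapsto\overline{z}$ is a nontrivial involution lying in $\Aut(\mathbb{F}(i))\subseteq SD(\mathbb{F}(i))$ since $i\notin\mathbb{F}\subseteq\mathbb{R}$; for the reverse inclusion one regards $f\in SD(\mathbb{F}(i))$ as an SD-map into $\mathbb{C}$ and invokes rows~4--5 of Table~\ref{Ttable1} to force $f\in\{z,\overline{z}\}$, whence $SD(\mathbb{F}(i))=\langle c\rangle\cong\mathbb{Z}/2\mathbb{Z}$ (row~4 is unconditional, so $SD(\mathbb{Q}(i))=\{z,\overline{z}\}$ is immediate).

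For part~(2): the same argument runs with the word ``continuous'' inserted throughout. Rows~3 and~6 of Table~\ref{Ttable1} classify the continuous SD-maps into $\mathbb{R}$ and into $\mathbb{C}$ respectively, giving $SD_{cont}(\mathbb{F})=(1)$ and $SD_{cont}(\mathbb{F}(i))=\langle c\rangle$ for any subfield $\mathbb{F}$ of $\mathbb{R}$. On the automorphism side, a continuous automorphism of such an $\mathbb{F}$ fixes $\mathbb{Q}$ and hence, by density of $\mathbb{Q}$ in $\mathbb{F}$, fixes $\mathbb{F}$ pointwise, leaving only $\mathrm{id}$; the same applied to $\mathbb{F}(i)$ shows $\Aut_{cont}(\mathbb{F}(i))$ is generated by $c$, matching the $SD_{cont}$ values.

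The only step that is not pure bookkeeping is the reverse inclusion in part~(1) for $\mathbb{F}(i)$ when $\mathbb{F}\neq\mathbb{Q}$: row~5 of Table~\ref{Ttable1} carries the hypothesis $f(\mathbb{F})\subseteq\mathbb{R}$, so one must first rule out that an arbitrary --- possibly wild --- element $f\in SD(\mathbb{F}(i))$ moves a real number off the real line. I expect this to rest on the structural analysis of~\cite{CKS}, where closure of $\mathbb{F}\cap(0,\infty)$ under $\sqrt{\cdot}$ is used precisely to prevent $f$ from rotating a positive real into a non-real square, rather than on a short self-contained argument; in the setting of part~(2) the difficulty disappears, since a continuous $f$ already fixes the dense subfield $\mathbb{Q}$.
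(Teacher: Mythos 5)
Your proposal is correct and matches the paper's (implicit) approach: the paper offers no independent proof of this theorem, presenting it as the specialization of Table~\ref{Ttable1} (i.e.\ of the results of~\cite{CKS}) to the case codomain $=$ domain, which is exactly what you do, supplemented by the routine verification that surjective solutions of~\eqref{Esumdiff} form a group under composition. Your flagged caveat about the hypothesis $f(\mathbb{F})\subseteq\mathbb{R}$ in row~5 is a fair observation, but that point is likewise deferred to~\cite{CKS} by the paper itself, so it does not constitute a gap relative to the paper's own treatment.
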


From the above statements one could have wondered if the SD-group is not
a new notion at all, but equals ${\rm Aut}(\mathbb{F})$. However, this is
false, and reveals $SD(\mathbb{F})$ is a novel group:

\begin{example}\label{Ex5}
Consider $f(w) = w^3$ over $\mathbb{F}_5 = \mathbb{Z} / 5 \mathbb{Z}$;
this map is obviously multiplicative, and fixes $0,1,4 \mod 5$; but it
interchanges $2,3 \mod 5$. So it is not additive (since $f(1+1) \neq
f(1)+f(1)$), hence not an automorphism. We claim $f$
satisfies~\eqref{Esumdiff}. Indeed, setting $y=0$ trivially works. If $y
\neq 0$, by multiplicativity one can replace $(x,y)$ by $(w := xy^{-1},
1)$; now we need to verify that
\[
\frac{(w+1)^3}{(w-1)^3} = \frac{w^3+1}{w^3-1}, \qquad \forall w \neq 1
\mod 5.
\]
This is done case by case, and shows that ${\rm id} \neq f \in
SD(\mathbb{F}_5)$. \qed
\end{example}

Thus, a closer look at~\eqref{Esumdiff} over prime fields -- more
generally, finite fields -- is in order, and this is the goal of the
present work.

At the outset, we point out that characteristic~$2$ is pathological.
Namely, the prime field $\mathbb{F}_2$ is the \textit{only} one that
satisfies~\eqref{Esumdiff}. For all other fields, the result fails as
badly as possible:

\begin{proposition}\label{Pchar2}
Suppose $\mathbb{F}, \widetilde{\mathbb{F}}$ are fields of characteristic
$2$. Then $f : \mathbb{F} \to \widetilde{\mathbb{F}}$
satisfies~\eqref{Esumdiff} if and only if $f(1) = 1$ and $f$ is
injective.
\end{proposition}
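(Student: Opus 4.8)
The plan is to exploit the collapse of addition and subtraction in characteristic $2$. First I would record the trivial but decisive identity $a - b = a + b$, valid for all elements $a,b$ of any field of characteristic $2$; applying it both in $\mathbb{F}$ (to the pair $x,y$) and in $\widetilde{\mathbb{F}}$ (to the pair $f(x),f(y)$) turns~\eqref{Esumdiff} into
\[
f\!\left( \frac{x+y}{x+y} \right) = \frac{f(x)+f(y)}{f(x)+f(y)}, \qquad \forall x \neq y .
\]
Since $x \neq y$ forces $x + y \neq 0$, the argument of $f$ on the left is $1$, so the left-hand side equals $f(1)$ for \emph{every} pair $x \neq y$.

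Next I would extract injectivity from the mere requirement that the right-hand side be meaningful. For~\eqref{Esumdiff} to hold at $(x,y)$ its right-hand side must be defined, i.e.\ $f(x) - f(y) = f(x) + f(y) \neq 0$, which is exactly $f(x) \neq f(y)$. Imposing this for all $x \neq y$ is precisely the statement that $f$ is injective. Granting that, the right-hand side above is $1$ for every admissible pair, and comparison with the left-hand side yields $f(1) = 1$. This establishes the forward implication.

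For the converse, assume $f(1) = 1$ and $f$ injective. Then for any $x \neq y$ we have $(x+y)/(x-y) = 1$, so the left-hand side of~\eqref{Esumdiff} is $f(1) = 1$; injectivity gives $f(x) \neq f(y)$, whence $f(x) - f(y) = f(x)+f(y) \neq 0$ and the right-hand side is $\big(f(x)+f(y)\big)/\big(f(x)+f(y)\big) = 1$ as well. Thus~\eqref{Esumdiff} holds, completing the proof.

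There is no real obstacle here; the only point deserving care is a bookkeeping convention, which I would state at the outset: the equation~\eqref{Esumdiff} is to be read as asserting that \emph{both sides are defined and equal} for all $x \neq y$, so that injectivity of $f$ emerges as a \emph{consequence} of $f$ being an SD-map in characteristic $2$ rather than as an added hypothesis. As a sanity check one may specialize to $\mathbb{F} = \widetilde{\mathbb{F}} = \mathbb{F}_2$: the only self-map fixing $1$ and being injective is the identity, recovering the assertion in the surrounding discussion that $\mathbb{F}_2$ is the unique field of characteristic $2$ belonging to its own SD-group, while for any larger field of characteristic $2$ the solution set is the (very large) collection of all injections fixing $1$.
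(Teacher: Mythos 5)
Your argument is correct and is essentially the paper's own proof, just written out in more detail: in characteristic $2$ the left side of~\eqref{Esumdiff} is always $f(1)$ (since $x-y=x+y\neq 0$ for $x\neq y$), and the right side is defined exactly when $f(x)\neq f(y)$, in which case it equals $1$. Your explicit remark that injectivity is forced by the denominator being nonzero is the same observation the paper makes in its one-line proof.
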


Thus, if $\mathbb{F} = \mathbb{F}_2$ then $f$ is the identity; but for
every other field, any injection $: \mathbb{F} \to \mathbb{F}$ that fixes
$1$ is a solution of~\eqref{Esumdiff}. For example, any set-bijection of
$\mathbb{F}_{2^k}$ that fixes $1$ is an SD-map.

\begin{proof}
The equation~\eqref{Esumdiff} reduces to precisely: (a)~$f$ is injective
and (b)~$f(1) = 1$, since $x+y=x-y\neq0$ and $f(x) + f(y) = f(x) - f(y)
\neq 0$ for $x \neq y \in \mathbb{F}$.
\end{proof}

Thus, our goal henceforth is to consider the question ``\textit{Is
$SD(\mathbb{F}) = \Aut(\mathbb{F})$?}'' in odd characteristic. Given the
counterexample of $\mathbb{F}_5$, a first question is: for how many (and
which) other finite fields of odd characteristic is the answer negative.
Remarkably, we show that $\mathbb{F}_5$ is the only exception among prime
finite fields -- and more strongly, among all odd finite fields. Even
stronger is our main result: $\mathbb{F}_5$ is the \textit{only exception
among all algebraic extensions} of $\mathbb{F}_p$:

\begin{theorem}\label{Tmain}
Let $\mathbb{F}$ be a field of odd characteristic $p>2$ that is algebraic
over $\mathbb{F}_p$. Then
\[
SD(\mathbb{F}) = \begin{cases}
\Aut(\mathbb{F}) & \text{ if } \mathbb{F} \ne \mathbb{F}_5, \\
\{ {\rm id}, w^3 \} \cong \mathbb{Z} / 2 \mathbb{Z} \qquad
& \text{ if } \mathbb{F} = \mathbb{F}_5  .
\end{cases}
\]
\end{theorem}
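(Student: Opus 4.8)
\emph{Normalizations and multiplicativity.}
Since the right-hand side of \eqref{Esumdiff} must make sense, every SD-map is injective, hence (by surjectivity) a bijection; as $|\mathbb{F}|\ge 3$ and $\mathrm{char}\,\mathbb{F}\ne 2$, comparing two instances of $y=0$ forces $f(0)=0$, then $y=0$ gives $f(1)=1$ and $y=-x$ gives $f(-x)=-f(x)$. To prove $f$ multiplicative I would set $u=\tfrac{x+y}{x-y}$ (with $x,y\ne 0$, $x\ne y$) and check that $\tfrac{u+1}{u-1}=\tfrac{x}{y}$ while, by \eqref{Esumdiff}, $\tfrac{f(u)+1}{f(u)-1}=\tfrac{f(x)}{f(y)}$; since $u$ depends only on $x/y$, so does $f(x)/f(y)$, and specializing $y=1$ identifies this common value with $f(x/y)$, so $f(xy)=f(x)f(y)$. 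Conversely, for multiplicative $f$ with $f(0)=0$, equation \eqref{Esumdiff} is \emph{equivalent} to the single identity
\[
\frac{f(t+1)}{f(t-1)}=\frac{f(t)+1}{f(t)-1}\qquad(t\ne 1),
\]
i.e.\ to $f$ commuting with the Möbius involution $G(t):=\tfrac{t+1}{t-1}$.

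\emph{Prime fields.}
Write $c_m=f(m)$, so $c_0=0$, $c_1=1$, $c_{-1}=-1$; the identity above reads $c_{m+1}=c_{m-1}\,G(c_m)$ for $m\ne1$, and $G\circ G=\mathrm{id}$. (For $p=3$ already $c_2=2$, so take $p\ge5$.) Multiplicativity gives $c_4=c_2^2$, and the recursion gives $c_3=G(c_2)$ and $c_5=G(c_2)G(c_2^2)$. If $p=5$ then $c_4=f(4)=f(-1)=-1$, so $c_2^2=-1$ and $c_2\in\{2,3\}$; these are $\mathrm{id}$ and $w\mapsto w^3$, both SD-maps by Example~\ref{Ex5}, so $SD(\mathbb{F}_5)=\{\mathrm{id},w^3\}\cong\mathbb{Z}/2\mathbb{Z}$. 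If $p\ge7$, comparing $c_6=c_2c_3$ with $c_6=c_4\,G(c_5)$ and simplifying yields $(c_2-2)(c_2^2+1)=0$; but $c_2^2=-1$ would give $c_4=-1=f(-1)$, hence $4=-1$ in $\mathbb{F}_p$, i.e.\ $p=5$, a contradiction. So $c_2=2$, the recursion propagates to $c_m=m$ for all $m$, and $SD(\mathbb{F}_p)=\Aut(\mathbb{F}_p)$.

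\emph{Finite fields in general.}
Now $f$ is a multiplicative bijection of $\mathbb{F}_q$, so $f(w)=w^k$ with $\gcd(k,q-1)=1$, $1\le k\le q-2$; composing with a suitable power of Frobenius ($\in\Aut\subseteq SD$) we may further assume $p\nmid k$, and it then suffices to prove $k$ is a power of $p$ modulo $q-1$. Since $w\mapsto w^k$ is an automorphism of the cyclic group $\mathbb{F}_q^\times$ it preserves the subgroup $\mathbb{F}_p^\times$, so $f|_{\mathbb{F}_p}\in SD(\mathbb{F}_p)$, which is $\mathrm{id}$ unless $p=5$; thus $k\equiv1\pmod{p-1}$ (resp.\ $k\equiv\pm1\pmod4$). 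Commuting with $G$ amounts to $Q_k\equiv0$ on $\mathbb{F}_q$, where $Q_k(t)=(t+1)^k(t^k-1)-(t-1)^k(t^k+1)$ has degree $2k-1$ and leading coefficient $2k\ne0$ in $\mathbb{F}_p$. If $2k-1<q$ this forces $Q_k=0$, impossible for $k\ge2$; hence $k=1$ unless $k\ge\tfrac{q+1}{2}$. Disposing of these large $k$ is the crux: I would use the inverse symmetry (the $SD$-inverse of $w\mapsto w^k$ is $w\mapsto w^{k'}$ with $kk'\equiv1\bmod(q-1)$, again reducible mod Frobenius, and its exponent is likewise $1$ or $\ge\tfrac{q+1}{2}$) together with a closer analysis of the zero locus of $Q_k$ — equivalently the agreement set in $\mathbb{P}^1(\overline{\mathbb{F}_p})$ of the two degree-$k$ maps $w\mapsto w^k$ and $G\circ(w\mapsto w^k)\circ G$, a set invariant under the dihedral group generated by $G$ and $w\mapsto-w$, of size at most $2k$, which would have to contain all $q+1$ points of $\mathbb{P}^1(\mathbb{F}_q)$. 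I expect this to be the main obstacle and the most computational part; its outcome is that $k$ is a power of $p$ mod $q-1$, so $f\in\Aut(\mathbb{F}_q)$ (and when $q=5^n$ with $n\ge2$ this in particular excludes $k\equiv3\bmod4$, since powers of $5$ are $\equiv1\bmod4$).

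\emph{Algebraic extensions.}
Finally, let $\mathbb{F}$ be an infinite algebraic extension of $\mathbb{F}_p$ and $f\in SD(\mathbb{F})$; again $f$ is a multiplicative bijection, so for every finite subfield $\mathbb{F}_{p^n}\subseteq\mathbb{F}$ the automorphism $f$ of $\mathbb{F}^\times$ preserves the characteristic subgroup $\mathbb{F}_{p^n}^\times$ of elements of order dividing $p^n-1$, hence $f$ restricts to an element of $SD(\mathbb{F}_{p^n})$. Given $a,b\in\mathbb{F}$, pick a finite subfield $K\subseteq\mathbb{F}$ with $a,b\in K$ and $|K|>5$ (enlarge $\mathbb{F}_p(a,b)$ inside $\mathbb{F}$, using that $\mathbb{F}$ is infinite); then $f|_K\in SD(K)=\Aut(K)$ by the finite case, so $f(a+b)=f(a)+f(b)$. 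Thus $f$ is additive on $\mathbb{F}$, and being also multiplicative and bijective it is a field automorphism. Together with $\Aut(\mathbb{F})\subseteq SD(\mathbb{F})$ this yields $SD(\mathbb{F})=\Aut(\mathbb{F})$, completing the proof.
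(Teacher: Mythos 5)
Your normalization/multiplicativity step, your prime-field recursion (including the clean isolation of $p=5$ via $c_2^2=-1$), and your passage from finite fields to infinite algebraic extensions are all sound and closely parallel the paper's argument. But there is a genuine gap exactly where you flag ``the main obstacle'': for a general finite field $\mathbb{F}_q$ you reduce to $f(w)=w^k$ with $\gcd(k,q-1)=1$, $p\nmid k$, and correctly conclude from $\deg Q_k=2k-1$ that either $k=1$ or $k\ge\tfrac{q+1}{2}$ --- and then you do not actually dispose of the large exponents. Your two sketched ideas do not obviously close this: the multiplicative inverse $k'$ of $k$ modulo $q-1$ need not be small (e.g.\ $k=\tfrac{q+1}{2}$ is its own inverse whenever $q\equiv 1\bmod 4$), and the dihedral-invariance bound of ``at most $2k$ agreement points'' is vacuous precisely in the range $k\ge\tfrac{q+1}{2}$, where $2k\ge q+1$. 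Since the infinite-extension step quotes the finite case for subfields of order $>5$ (which are typically non-prime), the gap propagates to the whole theorem.

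The paper closes this range with a different symmetry: not the $SD$-inverse of $f$, but the substitution $w\mapsto w^{-1}$ combined with $w^{q-1}\equiv 1$ on $\mathbb{F}_q^\times$. This converts the identity \eqref{Eratio} for exponent $k$ into the same identity for the \emph{complementary} exponent $l:=q-1-k$, whose associated polynomial is nonzero of degree $2l$; when $k>\tfrac{q+1}{2}$ one has $2l<q-3$ while the polynomial must vanish at all $q-3$ points $w\ne 0,\pm1$, a contradiction. The borderline case $k=\tfrac{q+1}{2}$ is then handled directly: $f(\theta)^2=\theta^{q+1}=\theta^2$ for a generator $\theta$, so $f(\theta)=\pm\theta$, and plugging $x=1$, $y=\theta$ into \eqref{Esumdiff} shows $f(\theta)=-\theta$ forces $\theta^2=-1$, i.e.\ $q=5$ and $f(w)=w^3$. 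You would need to supply an argument of this kind (or an equivalent one) before the finite-field case, and hence the theorem, is proved.
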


\noindent (In particular, this includes the fields
$\overline{\mathbb{F}_p}$ for all $p>2$, as well as $\mathbb{F}_{5^\ell}$
for $\ell \geq 2$.)
Thus, Theorem~\ref{Tmain} distinguishes $\mathbb{F}_5$ among all finite
fields, and more. In fact, the next result, which is in terms of the
SD-map $w \mapsto w^3$, distinguishes $\mathbb{F}_5$ among \textit{all}
fields of characteristic $\neq 2$.

\begin{theorem}\label{Tiff5}
Let $\mathbb{F}$ be a field of characteristic not $2$. The map $w \mapsto
w^3$ is in $SD(\mathbb{F}) \setminus \Aut(\mathbb{F})$, if and only if
$\mathbb{F} = \mathbb{F}_5$.
\end{theorem}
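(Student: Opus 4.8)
The plan is to prove both directions. The ``if'' direction ($\mathbb{F} = \mathbb{F}_5 \Rightarrow w \mapsto w^3 \in SD(\mathbb{F}_5)\setminus\Aut(\mathbb{F}_5)$) is already essentially Example~\ref{Ex5}: the map $w\mapsto w^3$ on $\mathbb{F}_5$ is multiplicative, fixes $0,1,4$, swaps $2,3$, hence is not additive and not an automorphism, and the case-by-case check in Example~\ref{Ex5} shows it satisfies~\eqref{Esumdiff}. So the substance is the ``only if'' direction: assuming $w\mapsto w^3$ lies in $SD(\mathbb{F})\setminus\Aut(\mathbb{F})$ for a field $\mathbb{F}$ of characteristic $\ne 2$, I must force $\mathbb{F}=\mathbb{F}_5$.

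First I would record what it means for $f(w) = w^3$ to satisfy~\eqref{Esumdiff}. Since $f$ is multiplicative, the substitution $y=0$ is automatic, and for $y\ne 0$ we may divide through by $y$ (using $f(x)/f(y) = (x/y)^3$) and reduce to the single-variable identity
\[
\frac{(w+1)^3}{(w-1)^3} = \frac{w^3+1}{w^3-1}, \qquad \forall\, w \in \mathbb{F},\ w \ne 1,
\]
with the additional requirement that the right-hand side be well-defined exactly when the left-hand side is, i.e.\ $w^3 = 1 \iff w = 1$, which says $\mathbb{F}$ contains no primitive cube root of unity. Cross-multiplying the displayed identity and expanding gives a polynomial identity in $w$; after simplification it collapses to something like $w^4 - \text{(small)}\cdot w^2 + \cdots = 0$ holding for all admissible $w$. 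The key point is that this is a \emph{polynomial} identity of bounded degree that must hold for every $w\in\mathbb{F}$ except possibly $w=1$ (and the excluded cube roots of unity, of which there are none beyond $1$). If $\mathbb{F}$ is infinite, a nonzero polynomial cannot vanish on a cofinite set, forcing the polynomial to be identically zero; I would check that its coefficients cannot all vanish in any characteristic $\ne 2$ (this is where a small characteristic like $3$ or $5$ might sneak in through coefficient cancellation — that needs to be handled, but I expect only characteristic $5$ survives, and even there only finitely many $w$ satisfy it). Hence $\mathbb{F}$ must be finite.

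So now $\mathbb{F} = \mathbb{F}_q$ with $q$ odd, $q\not\equiv 1 \pmod 3$ (the no-primitive-cube-root condition; note $q\equiv 1\pmod 3$ would put a bad cube root of unity in $\mathbb{F}$), and $w\mapsto w^3$ must satisfy the displayed identity for all $w\ne 1$ in $\mathbb{F}_q$. Combining with the polynomial identity derived above: the number of $w\in\mathbb{F}_q$ satisfying that bounded-degree polynomial is at most its degree, so $q-1$ (the number of admissible $w$) is bounded by a small constant, pinning $q$ down to a short finite list. I would then eliminate each small value of $q$ by hand: $q=3$ has $w\mapsto w^3 = w$ the identity (so it's in $\Aut$, excluded by hypothesis); $q=7,9,11,13,\dots$ fail the identity at some explicit $w$; and $q=5$ works (matching Example~\ref{Ex5}). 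The main obstacle I anticipate is the bookkeeping in the reduction step: making sure the cross-multiplied polynomial identity is handled uniformly across all odd characteristics, in particular verifying that coefficient degeneracies in characteristic $3$ or $5$ don't accidentally make the identity hold globally — but since $w\mapsto w^3$ is the identity map in characteristic $3$ and we have an explicit failing $w$ for the larger fields, these cases are dispatched directly rather than through the generic polynomial argument.
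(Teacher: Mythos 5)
Your overall route is the same as the paper's: reduce to the one-variable identity $\frac{(x+1)^3}{(x-1)^3}=\frac{x^3+1}{x^3-1}$ via multiplicativity (Theorem~\ref{Tmult}), cross-multiply to get a bounded-degree polynomial vanishing on all of $\mathbb{F}$, and conclude $\mathbb{F}$ is small. But as written the proposal has two concrete gaps. First, the decisive computation is never carried out, and your guess for its outcome is wrong: expanding $(x+1)^3(x^3-1)-(x-1)^3(x^3+1)$ gives exactly $6x^5-6x$, a degree-$5$ polynomial, not the ``$w^4-\cdots$'' you anticipate. Once you have $6(x^5-x)=0$ for all $x\ne 1$ (hence for all $x$, since $x=1$ is also a root), and $6\ne 0$ because ${\rm char}\,\mathbb{F}\ne 2,3$, you get $x^5=x$ on all of $\mathbb{F}$, so $|\mathbb{F}|\le 5$ immediately. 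This single step replaces your entire two-stage plan (separate infinite-field argument, then a ``short finite list'' of $q$ with hand-elimination of $q=7,9,11,13,\dots$) -- none of that is needed, and without the explicit polynomial your list and its elimination are not actually justified. Your worry about coefficient degeneracy in characteristic $5$ also evaporates: $6=1$ there, so the polynomial is $x^5-x$, which has at most $5$ roots in any field of characteristic $5$ larger than $\mathbb{F}_5$.

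Second, your dismissal of characteristic $3$ rests on a false statement: $w\mapsto w^3$ is the identity only on $\mathbb{F}_3$ itself, not on $\mathbb{F}_9$ or $\mathbb{F}_3(t)$. The correct (and still one-line) argument is that in characteristic $3$ the cube map is the Frobenius endomorphism, hence a field homomorphism; if it is surjective (as membership in $SD(\mathbb{F})$ requires) it is an automorphism, so it can never lie in $SD(\mathbb{F})\setminus\Aut(\mathbb{F})$. With these two repairs your argument coincides with the paper's proof.
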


The majority of the paper is devoted to showing
Theorems~\ref{Tmain} and~\ref{Tiff5} (and some variants). In the
Appendix, we also completely classify the SD-maps of the $p$-adic
numbers, for all $p$. Given that these and the reals form all completions
of $\mathbb{Q}$, we have:

\begin{theorem}
Let $\mathbb{F}$ be any field that is a completion of $\mathbb{Q}$. Then 
$SD(\mathbb{F}) = {\rm Aut}(\mathbb{F}) = \{ {\rm id} \}$.
\end{theorem}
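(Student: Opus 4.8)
The plan is to reduce the statement, via Ostrowski's theorem, to the cases $\mathbb{F} = \mathbb{R}$ and $\mathbb{F} = \mathbb{Q}_p$, both of which are settled above or in the Appendix, and to combine this with the classical computation of the relevant automorphism groups. Recall that a \emph{completion of $\mathbb{Q}$} is its completion with respect to some nontrivial absolute value, and that by Ostrowski's theorem every such absolute value is equivalent to the usual archimedean one or to a $p$-adic one; hence the only completions of $\mathbb{Q}$ are $\mathbb{R}$ and the fields $\mathbb{Q}_p$ for $p$ prime. (If one also wishes to admit the trivial absolute value, its completion is $\mathbb{Q}$ itself, for which $SD(\mathbb{Q}) = \Aut(\mathbb{Q}) = \{ {\rm id} \}$ is already part of the summarizing theorem above; so this degenerate case is harmless.) It therefore suffices to treat $\mathbb{R}$ and each $\mathbb{Q}_p$ separately.

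For $\mathbb{F} = \mathbb{R}$: the positive cone $\mathbb{R} \cap (0,\infty)$ is closed under $\sqrt{\cdot}\,$, so part~(1) of the summarizing theorem above --- equivalently, row~2 of Table~\ref{Ttable1} --- already gives $SD(\mathbb{R}) = \Aut(\mathbb{R}) = \{ {\rm id} \}$. Independently, $\Aut(\mathbb{R}) = \{ {\rm id} \}$ is classical: any field automorphism of $\mathbb{R}$ sends squares to squares, hence is order-preserving, hence fixes all of $\mathbb{R}$ since it fixes the order-dense subfield $\mathbb{Q}$.

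For $\mathbb{F} = \mathbb{Q}_p$: the identity $SD(\mathbb{Q}_p) = \{ {\rm id} \}$ is exactly the content of the Appendix, where we classify all SD-maps of $\mathbb{Q}_p$ for every prime $p$ --- including $p = 2$, which is \emph{not} covered by Proposition~\ref{Pchar2} since $\mathbb{Q}_2$ has characteristic $0$. And $\Aut(\mathbb{Q}_p) = \{ {\rm id} \}$ is again classical: an automorphism $\sigma$ of $\mathbb{Q}_p$ preserves the set of squares, hence preserves the algebraically defined valuation ring $\mathbb{Z}_p = \{ x \in \mathbb{Q}_p : 1 + p x^2 \in (\mathbb{Q}_p)^2 \}$ (for $p$ odd; an evident variant such as $1 + 8 x^2$ handles $p = 2$), hence preserves its maximal ideal $p\mathbb{Z}_p$ and all of its powers, so $\sigma$ is continuous and therefore fixes $\mathbb{Q}_p$ because it fixes the dense subfield $\mathbb{Q}$. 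Assembling Ostrowski's theorem with the archimedean and non-archimedean cases yields the statement. The only genuine obstacle is the classification of SD-maps of $\mathbb{Q}_p$ \emph{without} a continuity hypothesis --- note that row~7 of Table~\ref{Ttable1} records only the continuous solutions --- but this is precisely what is carried out in the Appendix and can simply be quoted here.
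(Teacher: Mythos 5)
Your proposal is correct and matches the paper's (implicit) argument exactly: the theorem is stated as an immediate consequence of Ostrowski's classification of the completions of $\mathbb{Q}$ into $\mathbb{R}$ and the fields $\mathbb{Q}_p$, with the real case covered by the earlier results (row~2 of Table~\ref{Ttable1}) and the $p$-adic case by Theorem~\ref{Tpadic} in the Appendix. Your added remarks on the classical computations of $\Aut(\mathbb{R})$ and $\Aut(\mathbb{Q}_p)$, and on the fact that the Appendix does not assume continuity, are accurate but not needed beyond what the cited results already give.
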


\section{A preliminary result, and some additional ones}

The proof of Theorem~\ref{Tmain} has three parts; the first can be
formulated and shown over arbitrary fields (of characteristic not $2$):

\begin{theorem}\label{Tmult}
Suppose $\mathbb{F}, \widetilde{\mathbb{F}}$ are arbitrary fields of
characteristic not $2$, and $f : \mathbb{F} \to \widetilde{\mathbb{F}}$
is an SD-map, i.e.\ $f$ satisfies~\eqref{Esumdiff}. Then $f$ is
injective, multiplicative, and odd, and fixes $0,1$.
\end{theorem}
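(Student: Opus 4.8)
The plan is to extract the algebraic consequences of~\eqref{Esumdiff} by making judicious substitutions, working purely formally so that nothing about the characteristic beyond ``$2$ is invertible'' is used. First I would establish injectivity: if $f(x) = f(y)$ for some $x \neq y$, then the right-hand side of~\eqref{Esumdiff} has vanishing denominator, which is impossible since the left-hand side is a well-defined element of $\widetilde{\mathbb{F}}$; more carefully, one must rule this out by noting that the equation asserts $f(x) - f(y) \neq 0$ whenever $x \neq y$ and $x - y \neq 0$ (so that $(x+y)/(x-y)$ is defined), which is exactly injectivity. A small separate argument handles the pair $(x, y)$ with $x = -y$, i.e.\ showing $f(x) \neq f(-x)$ for $x \neq 0$; this will fall out of oddness once that is in hand, or can be bootstrapped directly.

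Next I would pin down the values at the base points. Setting $x = 1, y = 0$ in~\eqref{Esumdiff} gives $f(1) = (f(1) + f(0))/(f(1) - f(0))$, which after clearing denominators is a quadratic relation in $f(0), f(1)$; combined with a second substitution such as $x = t, y = 0$ (giving $f(1) = (f(t) + f(0))/(f(t) - f(0))$ for all $t \neq 0$, forcing $f(0)$ to behave like an additive identity relative to every value $f(t)$) one deduces $f(0) = 0$ and then $f(1) = 1$. With $f(0) = 0$ in hand, the substitution $y = 0$ collapses~\eqref{Esumdiff} to the trivial identity, so the real content comes from $y \neq 0$.

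For multiplicativity and oddness, the key manoeuvre is to exploit the symmetry of the cross-ratio-like expression $(x+y)/(x-y)$ under the transformations $(x,y) \mapsto (y,x)$ and scaling $(x,y) \mapsto (\lambda x, \lambda y)$. The substitution $(x,y) \mapsto (y,x)$ sends $(x+y)/(x-y)$ to its negative, and $(f(x)+f(y))/(f(x)-f(y))$ likewise to its negative, so~\eqref{Esumdiff} forces $f(-u) = -f(u)$ for every $u$ in the image of $x \mapsto (x+y)/(x-y)$; one checks this image (as $x, y$ range appropriately) is all of $\mathbb{F} \setminus \{\pm 1\}$ or so, and the exceptional points $\pm 1$ are handled by a direct substitution — giving that $f$ is odd. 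For multiplicativity, the idea is to compare the equation at $(x,y)$ with the equation at a scaled pair, or more efficiently to use oddness to rewrite~\eqref{Esumdiff}: replacing $y$ by $-y$ gives $f((x-y)/(x+y)) = (f(x)-f(y))/(f(x)+f(y))$, so the two equations say that $f$ intertwines $u \mapsto 1/u$ appropriately; combining this with substitutions of the form $x \mapsto$ something, $y = 1$ to introduce a free variable, one derives $f(ab) = f(a)f(b)$. Concretely I expect the cleanest route is: set $y = 1$ to get $f((x+1)/(x-1)) = (f(x)+1)/(f(x)-1)$, apply the same with $x$ replaced by various Möbius transforms of $x$, and solve the resulting system; the Möbius group generated by $x \mapsto (x+1)/(x-1)$ and sign change is finite and small, which keeps the bookkeeping finite.

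The main obstacle will be the multiplicativity step: unlike injectivity and the base-point evaluations, which are essentially one-line substitutions, getting $f(ab) = f(a)f(b)$ requires threading together several instances of~\eqref{Esumdiff} and being careful about which arguments are legal (denominators nonzero, arguments not equal). The delicate points are (i) ensuring the chain of substitutions never divides by zero — for instance the expression $(x+1)/(x-1)$ is illegal at $x = 1$, and its further transforms exclude a few more points, so one must argue these finitely many excluded values separately or by a density/continuity-free algebraic trick; and (ii) covering the degenerate cases $a = 0$, $b = 0$, $a = \pm 1$, $a = b$, $a = -b$ where the generic argument breaks, each of which should reduce to the already-established facts $f(0) = 0$, $f(1) = 1$, and oddness. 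Once injectivity, $f(0)=0$, $f(1)=1$, oddness, and multiplicativity are all in place, the theorem is proved; none of these arguments uses finiteness or algebraicity of $\mathbb{F}$, only $\mathrm{char}\,\mathbb{F} \neq 2$, as claimed.
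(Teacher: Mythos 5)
Your plan matches the paper's proof in outline, and the first three steps are essentially right as you describe them: injectivity is just the observation that~\eqref{Esumdiff} itself asserts $f(x)-f(y)\neq 0$ for every pair $x\neq y$ (your worry about the pair $x=-y$ is unfounded --- since ${\rm char}\,\mathbb{F}\neq 2$ one has $-y\neq y$ for $y\neq 0$, so that pair is covered by the same one-line observation); and the substitution $y=0$, $x=t$ arbitrary does force $f(0)=0$ and then $f(1)=1$ exactly as you say, because $(f(t)+f(0))/(f(t)-f(0))$ being constant in $t$ is incompatible with injectivity unless $f(0)=0$. For oddness the paper is more direct than your swap argument: apply~\eqref{Esumdiff} to the pair $(y,-y)$ to get $0=f(0)=(f(y)+f(-y))/(f(y)-f(-y))$, i.e.\ $f(-y)=-f(y)$, with no need to analyze the image of $(x,y)\mapsto(x+y)/(x-y)$ or treat exceptional points. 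The one place where your proposal is not yet a proof is multiplicativity, and there you name the correct idea and then walk away from it: the ``M\"obius system'' you expect to be cleanest is a detour with genuine bookkeeping hazards, whereas the scaling invariance you mention first closes the argument in two lines. Namely, for $k\neq 1$ and $x\neq 0$ apply~\eqref{Esumdiff} to the pair $(kx,x)$: since $(kx+x)/(kx-x)=(k+1)/(k-1)$ is independent of $x$, so is
\[
\frac{f(kx)+f(x)}{f(kx)-f(x)} \;=\; 1+\frac{2\,f(x)}{f(kx)-f(x)},
\]
hence $f(kx)/f(x)$ is independent of $x$ (using $f(x)\neq 0$ and $f\bigl((k+1)/(k-1)\bigr)\neq 1$, both consequences of injectivity and the fixed points $0,1$); evaluating at $x=1$ gives $f(kx)=f(k)f(x)$, and the degenerate cases $k=1$ or $x=0$ are immediate from $f(0)=0$, $f(1)=1$. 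On this route the obstacles you anticipate (excluded values of iterated M\"obius transforms, long chains of substitutions) simply do not arise.
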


Thus, SD-maps are ``close'' to being field homomorphisms: they just need
not be additive.

\begin{proof}
This was shown in~\cite{CKS} for fields of characteristic zero; for
self-containedness, we reproduce the short proof here.
Clearly, $f$ is one-to-one on $\mathbb{F}$, since if $x \neq y$ then the
denominator on the right in~\eqref{Esumdiff} must be nonzero.
Next, we claim that $f$ fixes $0$ and $1$. Indeed, for all $x \in
\mathbb{F}^\times$ we have:
\begin{equation}
f(1) = f \left( \frac{x+0}{x-0} \right) = \frac{f(x) + f(0)}{f(x) - f(0)}
= 1 + 2 \frac{f(0)}{f(x) - f(0)}.
\end{equation}

This implies: $\displaystyle \frac{f(1)-1}{2} = \frac{f(0)}{f(x)-f(0)}$
(for the left-hand side to make sense, $2 \neq 0$ in
$\widetilde{\mathbb{F}}$). That is, the right-hand side is a constant
function on $\mathbb{F} \setminus \{ 0 \}$ (which has at least $2$
elements), since the left side is. Since $f$ is one-to-one, we get $f(0)
= 0$ and this in turn yields $f(1) = 1$; moreover, $f$ is nonzero on
$\mathbb{F} \setminus \{ 0 \}$.

From $f(0) = 0$, it is immediate that $f$ is odd:
\begin{equation}
0 = f(0) = f \left( \frac{y+(-y)}{y-(-y)} \right) = \frac{f(y) +
f(-y)}{f(y)-f(-y)}, \qquad \forall y > 0.
\end{equation}

Finally, we claim that $f$ is multiplicative on $\mathbb{F}$. For this,
let $1 \neq k, x \neq 0$ be in $\mathbb{F}$. Then $kx \neq x$, so
\[
f \left( \frac{k+1}{k-1} \right) = \frac{f(kx) + f(x)}{f(kx) - f(x)} = 1
+ 2 \frac{f(x)}{f(kx)-f(x)}.
\]
Since $f(\frac{k+1}{k-1}) \neq 1$ and $f(x) \neq 0$ (because $f$ is
one-to-one), it follows that
\begin{equation}
\frac{2}{f(\frac{k+1}{k-1}) - 1} = \frac{f(kx)-f(x)}{f(x)} =
\frac{f(kx)}{f(x)} - 1 \quad \implies \quad \frac{f(kx)}{f(x)} = 1 +
\frac{2}{f(\frac{k+1}{k-1}) - 1}, \quad \forall x \neq 0.
\end{equation}

The right-hand side is independent of $x$, so it equals the left-hand
side at $x=1$, which is precisely $f(k)$. Thus, we obtain over
$\mathbb{F}$:
\begin{equation}\label{E1}
f(kx) = f(k) f(x), \qquad \forall k \neq 1, x \neq 0.
\end{equation}
If instead $k=1$ or $x=0$ then~\eqref{E1} is direct, since $f$ fixes
$0,1$. Thus, $f$ is multiplicative.
\end{proof}

With Theorem~\ref{Tmult} at hand, we proceed. The reader who wishes to
see just the proof of Theorem~\ref{Tmain}, should move directly to the
next section -- where we provide a direct path to the result. In the rest
of this section, we deduce additional information about SD-maps in
arbitrary characteristic, beginning with asking if the existence of an
SD-map necessitates the characteristics being equal. This is false for
small $p$, as we now explain beginning with $p=3$:

\begin{example}
Suppose $\mathbb{F} = \mathbb{F}_3$ and $\widetilde{\mathbb{F}}$ is any
field of characteristic not $2$. One can use~\eqref{Esumdiff} thrice, to
explicitly verify that the map $f(w) = w$ for $w = -1, 0, 1$ is an SD-map
-- which is not a field homomorphism unless ${\rm
char}(\widetilde{\mathbb{F}}) = 3$. \qed
\end{example}

The next case is $p=5$:

\begin{proposition}
There exists an SD-map $f : \mathbb{F}_5 \to \widetilde{\mathbb{F}}$
(i.e., a solution to~\eqref{Esumdiff}) if and only if the latter field
admits a primitive fourth root of unity.
\end{proposition}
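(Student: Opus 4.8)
The plan is to combine Theorem~\ref{Tmult} with a short explicit verification. Throughout I restrict to $\widetilde{\mathbb{F}}$ of characteristic $\neq 2$, as in the preceding example: a field of characteristic $2$ admits no primitive fourth root of unity, and conversely it admits no SD-map from $\mathbb{F}_5$ either, since there $f(x) + f(y) = f(x) - f(y)$, so \eqref{Esumdiff} (whose right-hand side already forces $f$ to be injective) would give $f\bigl(\tfrac{x+y}{x-y}\bigr) = 1$ for all $x \neq y$; as the fractions $\tfrac{x+y}{x-y}$ exhaust $\mathbb{F}_5$, this makes $f$ constant, a contradiction. So in the characteristic-$2$ case both sides of the asserted equivalence are vacuously false, and we may assume $\mathrm{char}\,\widetilde{\mathbb{F}} \neq 2$.

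For the ``only if'' direction, suppose $f : \mathbb{F}_5 \to \widetilde{\mathbb{F}}$ is an SD-map. By Theorem~\ref{Tmult}, $f$ is injective, multiplicative, odd, and fixes $0$ and $1$. Since $\mathbb{F}_5^\times$ is cyclic of order $4$ generated by $2$ (with $2^2 = 4$, $2^3 = 3$, $2^4 = 1$ in $\mathbb{F}_5$), multiplicativity shows $f$ is determined by $\zeta := f(2)$ via $f(2^k) = \zeta^k$, and in particular $\zeta^4 = f(1) = 1$. Oddness gives $\zeta^2 = f(4) = f(-1) = -f(1) = -1$, so $\zeta^2 = -1 \neq 1$; hence $\zeta$ has multiplicative order exactly $4$, i.e.\ $\zeta$ is a primitive fourth root of unity in $\widetilde{\mathbb{F}}$.

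For the ``if'' direction, fix a primitive fourth root of unity $\zeta \in \widetilde{\mathbb{F}}$, so $\zeta^2 = -1$, and define $f$ by $f(0) = 0$ and $f(2^k) = \zeta^k$ for $k = 0,1,2,3$; that is, $f(1) = 1$, $f(2) = \zeta$, $f(3) = -\zeta$, $f(4) = -1$. This $f$ is injective ($1, \zeta, -1, -\zeta$ are pairwise distinct, using that $\zeta$ has order $4$ and $\mathrm{char}\,\widetilde{\mathbb{F}} \neq 2$) and multiplicative, so as in Example~\ref{Ex5} one reduces the verification of \eqref{Esumdiff}: the case $y = 0$ gives $1 = 1$ on both sides, while for $y \neq 0$ one puts $w := xy^{-1} \neq 1$ and, using $f(y) \neq 0$ and $f(w) \neq 1$ (by injectivity), the identity becomes $f\bigl(\tfrac{w+1}{w-1}\bigr) = \tfrac{f(w)+1}{f(w)-1}$ for $w \in \{0,2,3,4\}$. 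Each of these four cases is a one-line check using $\zeta^2 = -1$ --- for instance $w = 2$ amounts to $\tfrac{\zeta+1}{\zeta-1} = -\zeta$, i.e.\ $-\zeta(\zeta-1) = \zeta + 1$.

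The only real obstacle is bookkeeping rather than mathematics: one must be careful that the reduction to the normalized pair $(w,1)$ is legitimate --- i.e.\ that the denominators $f(y)$ and $f(w) - 1$ are nonzero --- and that the degenerate characteristic-$2$ target is handled correctly on both sides of the equivalence. Once Theorem~\ref{Tmult} is invoked, what remains is just the finite computation above.
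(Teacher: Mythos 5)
Your proof is correct and follows essentially the same route as the paper: invoke Theorem~\ref{Tmult} to see that $f(2)$ must be an element of multiplicative order $4$ (hence a primitive fourth root of unity) for the ``only if'' direction, and for the converse define $f$ on the cyclic group $\mathbb{F}_5^\times$ by sending the generator $2$ to $\zeta$ and verify~\eqref{Esumdiff} via the multiplicative reduction to pairs $(w,1)$. Your explicit treatment of the characteristic-$2$ codomain is a welcome extra step, since Theorem~\ref{Tmult} assumes ${\rm char}(\widetilde{\mathbb{F}}) \neq 2$ and the paper's proof only disposes of that case implicitly (via the observation that such fields have no primitive fourth roots of unity).
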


Thus, all finite fields of size $1 \mod 4$, and all characteristic zero
fields containing the Gaussian rationals $\mathbb{Q}(i)$, have SD-maps
into them from $\mathbb{F}_5$.

\begin{proof}
By Theorem~\ref{Tmult}, $f$ is multiplicative and injective. Since $2
\mod 5$ is a primitive fourth root of unity, the ``only if'' assertion
follows. Conversely, let $\zeta \in \widetilde{\mathbb{F}}$ be such a
primitive root, and define $f : \mathbb{F}_5 \to \widetilde{\mathbb{F}}$
via:
\[
f(0) = 0, \quad f(\pm 1) = \pm 1, \quad f(\pm 2) = \pm \zeta.
\]
Note that $\zeta \neq \pm \zeta$ since then $1 = -1$ by cancelling, in
which case $w^4 - 1 = (w^2-1)(w^2+1) = (w^2-1)^2 = (w-1)^4$, and so
$\widetilde{\mathbb{F}}$ does not have nontrivial (primitive) fourth
roots of unity.

We now check that $f$ is an SD-map. Indeed, if $x$ or $y$ is zero
then~\eqref{Esumdiff} follows because $f$ is odd and fixes $1$. Else
since $f$ is multiplicative, we can replace $(x,y)$ by $(w = xy^{-1},1)$
as in Example~\ref{Ex5}, and it suffices to show:
\[
\frac{f(w+1)}{f(w-1)} = \frac{f(w)+1}{f(w)-1}, \qquad \forall w \neq 0, 1.
\]
This is trivial for $w=-1$, and easily checked for $w=\pm 2$ since
$\zeta^2 = -1$ (and $-2 \neq 0$):
\begin{align*}
\frac{f(2)+1}{f(2)-1} = &\ \frac{\zeta+1}{\zeta-1} =
\frac{(\zeta+1)^2}{\zeta^2-1} = \frac{2 \zeta}{-2} =
-\zeta = f(-2) = \frac{f(2+1)}{f(2-1)},\\
\frac{f(-2)+1}{f(-2)-1} = &\ \frac{-\zeta+1}{-\zeta-1} =
\frac{(-\zeta+1)^2}{\zeta^2-1} = \frac{-2 \zeta}{-2} =
\zeta = f(2) = \frac{f(-2+1)}{f(-2-1)}. \qedhere
\end{align*}
\end{proof}

Thus there exist SD-maps from $\mathbb{F}_p$ to e.g.\ $\mathbb{Q}(i)$,
for $p=3,5$. However, with these two exceptions (and $p=2$), the result
is true in all other characteristics. More strongly, every SD-map fixes
the prime subfield:

\begin{theorem}\label{Tchar}
Suppose $f : \mathbb{F} \to \widetilde{\mathbb{F}}$ is an SD-map, with
${\rm char}(\mathbb{F}) \neq 2,3,5$ and ${\rm
char}(\widetilde{\mathbb{F}}) \neq 2$. Then ${\rm char}(\mathbb{F}) =
{\rm char}(\widetilde{\mathbb{F}})$. More strongly, $f$ fixes the common
prime subfield.
\end{theorem}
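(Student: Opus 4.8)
The plan is to prove that, under these hypotheses, $f$ fixes every integer (interpreted in the appropriate prime ring), from which everything follows immediately. Throughout I take for granted Theorem~\ref{Tmult}: $f$ is injective, multiplicative, odd, and fixes $0$ and $1$.

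The first step, which I expect to be the crux, is to show $f(2)=2$. Put $a:=f(2)\in\widetilde{\mathbb{F}}$; injectivity together with ${\rm char}(\mathbb{F})\neq 2,3$ (so that $2\notin\{0,\pm1\}$ in $\mathbb{F}$) gives $a\notin\{0,\pm1\}$. Substituting $(x,y)=(2,1)$ into~\eqref{Esumdiff} (for which $\tfrac{x+y}{x-y}=3$) yields $f(3)=\tfrac{a+1}{a-1}$. I would then compute $f(5)$ in two ways. From $(x,y)=(3,2)$ (for which $\tfrac{x+y}{x-y}=5$): $f(5)=\tfrac{f(3)+a}{f(3)-a}=\tfrac{a^{2}+1}{1+2a-a^{2}}$. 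From $(x,y)=(4,1)$ (for which $\tfrac{x+y}{x-y}=\tfrac{5}{3}$), using $f(4)=a^{2}$ (multiplicativity): $f(\tfrac{5}{3})=\tfrac{f(4)+1}{f(4)-1}=\tfrac{a^{2}+1}{a^{2}-1}$, and hence $f(5)=f(\tfrac{5}{3})f(3)=\tfrac{a^{2}+1}{a^{2}-1}\cdot\tfrac{a+1}{a-1}=\tfrac{a^{2}+1}{(a-1)^{2}}$. The hypothesis ${\rm char}(\mathbb{F})\neq 3$ (so $3,4\neq 0$ in $\mathbb{F}$, whence $f(3)\neq 0$ and $4\neq 1$) and injectivity keep all denominators nonzero and justify cancelling the factor $a+1$. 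Comparing the two values of $f(5)$: if $a^{2}+1\neq 0$ we cancel it to get $(a-1)^{2}=1+2a-a^{2}$, i.e.\ $2a(a-2)=0$, forcing $a=2$ since $2\neq 0$ in $\widetilde{\mathbb{F}}$ and $a\neq 0$. And $a^{2}=-1$ cannot occur: it would give $f(4)=a^{2}=-1=f(-1)$, hence $4=-1$ in $\mathbb{F}$ by injectivity, i.e.\ ${\rm char}(\mathbb{F})=5$, which is excluded.

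The rest is a bootstrap. The device is to apply~\eqref{Esumdiff} at $(x,y)=(n+1,n-1)$, for which $\tfrac{x+y}{x-y}=n$; for $n\geq 2$ this is legitimate since $x\neq y$ and $f(x)\neq f(y)$ both reduce to $2\neq 0$, and it reads
\[
f(n)=\frac{f(n+1)+f(n-1)}{f(n+1)-f(n-1)} .
\]
Argue by induction with base cases $f(0)=0$, $f(1)=1$, $f(2)=2$: if $f(n-1)=n-1$ and $f(n)=n$, the display rearranges to $(n-1)\cdot f(n+1)=(n-1)(n+1)$ in $\widetilde{\mathbb{F}}$, so $f(n+1)=n+1$ \emph{provided $n-1\neq 0$ in $\widetilde{\mathbb{F}}$}; but $n-1=0$ in $\widetilde{\mathbb{F}}$ would give $f(n-1)=0=f(0)$, hence $n-1=0$ in $\mathbb{F}$ by injectivity. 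So the induction proceeds as long as $n-1\neq 0$ in $\mathbb{F}$. If ${\rm char}(\mathbb{F})=p>0$, it thus establishes $f(n)=n$ for $n=0,1,\dots,p$; but $f(p)=f(0)=0$ since $p=0$ in $\mathbb{F}$, so $p=0$ in $\widetilde{\mathbb{F}}$ and ${\rm char}(\widetilde{\mathbb{F}})=p$. If ${\rm char}(\mathbb{F})=0$, the induction never stops, so $f$ fixes every integer; and then ${\rm char}(\widetilde{\mathbb{F}})=0$ as well, for a positive value $q$ would give $f(q)=q\cdot 1_{\widetilde{\mathbb{F}}}=0=f(0)$, forcing $q=0$ in $\mathbb{F}$. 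Either way the characteristics agree, and $f$ fixes the prime subfield pointwise --- namely all of $\mathbb{F}_{p}=\{0,\dots,p-1\}$ in positive characteristic, and all of $\mathbb{Q}$ (extending from $\mathbb{Z}$ by multiplicativity) in characteristic $0$.

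The hard part is clearly the first step. The ``projective'' content of~\eqref{Esumdiff} alone never pins $f$ down: putting $y=1$ turns it into a relation that is symmetric under the M\"obius involution $t\mapsto\tfrac{t+1}{t-1}$, so iterating gains nothing, and naively equating values of $f$ obtained from different substitutions tends to produce only tautologies. One must genuinely feed in multiplicativity --- here via $f(4)=f(2)^{2}$ --- and choose the substitutions so that the resulting identity is \emph{not} automatic; it is exactly at that point that characteristics $3$ and $5$ must be excluded, consistently with the genuine SD-maps exhibited earlier over $\mathbb{F}_{3}$ and $\mathbb{F}_{5}$. The bootstrap is then routine, the one point requiring care being to keep track of which field a given relation ``$n\neq 0$'' lives in.
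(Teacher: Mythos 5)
Your proof is correct and follows essentially the same route as the paper's: both pin down $u=f(2)$ by exploiting multiplicativity to evaluate one quantity in two ways (you compare two computations of $f(5)$; the paper compares $f(6)$ with $f(2)f(3)$), rule out the spurious root $u^2=-1$ via injectivity and ${\rm char}(\mathbb{F})\neq 5$, and then bootstrap $f(n)=n$ by an integer recurrence to equate the characteristics and fix the prime subfield. The remaining differences are cosmetic --- your bootstrap substitutes $(x,y)=(n+1,n-1)$ where the paper uses the $y=1$ recurrence, and you exclude $u^2=-1$ via $f(4)=f(-1)$ rather than $f(5)=0$ --- and the side conditions (nonvanishing denominators, which field each relation $n\neq 0$ lives in) are all tracked correctly.
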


Thus, the existence of an SD-map $f : \mathbb{F} \to
\widetilde{\mathbb{F}}$ automatically places restrictions on the
characteristic. Notice that this is immediate if $f$ is an automorphism,
which \textit{a posteriori} holds for all finite prime fields
$\mathbb{F}_p$, $p \neq 5$. However, the proof for a (more general)
SD-map is more involved, since we cannot assume $f$ is additive -- by
Example~\ref{Ex5}.

This result also (post-facto) justifies our working in~\cite{CKS} with
SD-maps between fields of characteristic zero: such maps cannot exist if
${\rm char}(\mathbb{F}) = 0 \neq {\rm char}(\widetilde{\mathbb{F}})$.

\begin{proof}
Set $p := {\rm char}(\mathbb{F}) \geq 0$. Since $p=0$ or $p \geq 7$, so
$0 = f(0), 1 = f(1)$, and $f(2), \dots, f(6)$ are pairwise distinct by
injectivity. As a remarkable ``preview'' of what follows: we only need
these values (in fact, not even $f(4)$) to prove the result!

We now explicitly compute $f$ at these (and other) positive integers in
terms of $f(2)$ and via a recurrence relation for $f(n)$. The key
observation is to set $y = 1$ in~\eqref{Esumdiff}:
\[
f \left( \frac{n+1}{n-1} \right) = \frac{f(n)+1}{f(n)-1}, \qquad \forall
n = 2, 3, \ldots \in \mathbb{F}.
\]

Since $f$ is injective and $n \neq 1$, we have $f(n) \neq 1$. By
multiplicativity, we therefore obtain a recurrence relation for the
$f(n)$:
\begin{equation}\label{Erecur}
f(n+1) = f(n-1) \cdot \frac{f(n)+1}{f(n)-1}, \qquad \forall 2 \leq n \in
\mathbb{F},
\end{equation}
where we ``cycle round to $n=p=0$'' if $p>0$.
Using this relation, one can explicitly compute $f(n)$ -- in terms of
$u := f(2)$ -- e.g.,
\begin{align}\label{E5}
\begin{aligned}
f(0) = &\ 0, \quad f(1) = 1, \quad f(2) = u, \quad f(3) =
\frac{u+1}{u-1}, \quad f(4) = u^2, \quad f(5) = \frac{u^2+1}{(u-1)^2},\\
f(6) = &\ u(u^2-u+1).
\end{aligned}
\end{align}
(See Lemma~\ref{Lformula} for a closed-form expression for $f(n)$ for
each $n$.) Now since $f(2) f(3) = f(6)$ and $u \neq 1$, we have
\[
u(u+1) = (u-1)u (u^2-u+1) \quad \implies \quad u(u-2)(u^2+1) = 0.
\]
If $u = f(2)$ does not equal $2$, then (since $u \neq 0$,) $u^2+1 = 0$.
Hence $f(5) = 0$, which contradicts the injectivity of $f$.

We conclude that $f(2) = u = 2$, and hence inductively via~\eqref{Erecur}
that $f(n) = n$ for all $2 \leq n \in \mathbb{F}$.
This observation implies both the assertions in the theorem:
\begin{enumerate}
\item The characteristics agree. Indeed, if $p=0$ then $f(1) = 1, f(1+1)
= 1+1, \dots$ are pairwise distinct in $\widetilde{\mathbb{F}}$, whence
${\rm char}(\widetilde{\mathbb{F}}) = 0$. Else $p \geq 7$, and $f(0) = 0,
\dots, f(p-1) = p-1$ are pairwise distinct in $\widetilde{\mathbb{F}}$;
and $p = f(p) = f(0) = 0$. Hence ${\rm char}(\widetilde{\mathbb{F}}) =
p$.

\item The prime field is fixed. Indeed, the above observation (and the
previous point) show this if $p \geq 7$; and if $p=0$ then the oddness of
$f$ implies that it fixes $\mathbb{Z} \subset \mathbb{F}$; now we are
done by multiplicativity. \qedhere
\end{enumerate}
\end{proof}

\begin{remark}
If $p \neq 7$ then an alternate approach to the one above yields that
$f(n) = n$ for all $n \geq 0$. This was carried out in~\cite{CKS}, and
used that $0 = f(8) - f(2) f(4) = u^2(u-1)(u-2)$. The approach here has
two advantages. First, it only requires ${\rm
char}(\widetilde{\mathbb{F}}) \neq 2$, compared to the assumption ${\rm
char}(\widetilde{\mathbb{F}}) = 0$ in~\cite{CKS}.
Second, unlike~\cite{CKS} the approach here also works for $p=7$. (And for
$p=3,5$ we saw above that the result is false.)
\end{remark}

Remarkably, the proof of Theorem~\ref{Tchar} required only the values
$f(n)$ for $n = 0,1,2,3,5,6$. However, one can write a closed-form
expressions for all $f(n)$ (which looks different in~\eqref{E5} for odd
and even values of $n$). For completeness, we present these formulas to
close this section.

\begin{lemma}\label{Lformula}
Define the sequence of one-variable polynomials $p_0, p_1, \ldots$ in
$\mathbb{Z}[u]$ via:
\[
p_0(u) := 1, \qquad p_{k+1}(u) := 2 + (u-1) p_k(u).
\]
Then for all $k \geq 0$, we have:
\begin{align}\label{Egeom}
p_k(u) = &\ 2 + 2(u-1) + 2(u-1)^2 + \cdots + 2(u-1)^{k-1} \ + (u-1)^k,\\
f(2k+1) = &\ \frac{p_k(u)}{(u-1)^k},\notag\\
f(2k+2) = &\ p_{k+1}(u)-1 = 1 + (u-1)p_k(u).\notag
\end{align}
\end{lemma}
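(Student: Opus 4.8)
The plan is to prove the closed form for $p_k$ and the two formulas for $f(2k+1)$, $f(2k+2)$ all by induction on $k$, using only the recurrence~\eqref{Erecur} and the seed $f(0)=0$, $f(1)=1$, $f(2)=u$; I would deliberately avoid re-invoking multiplicativity to simplify, since substituting $f(2k)=u^k$ would force spurious relations on $u$ — precisely the clash $f(2)f(3)\ne f(6)$ exploited in the proof of Theorem~\ref{Tchar}. The closed form~\eqref{Egeom} for $p_k$ is the easy part and uses no properties of $f$: it holds for $k=0$ since $p_0(u)=1=(u-1)^0$, and applying $p_{k+1}=2+(u-1)p_k$ turns $2+2(u-1)+\dots+2(u-1)^{k-1}+(u-1)^{k}$ into $2+2(u-1)+\dots+2(u-1)^{k}+(u-1)^{k+1}$. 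I would keep this polynomial form rather than the equivalent geometric-series expression, to avoid singling out $u=2$.

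For the formulas for $f$ I would run a \emph{simultaneous} induction on $k$ on the two statements $f(2k+1)=p_k(u)/(u-1)^k$ and $f(2k+2)=1+(u-1)p_k(u)$ (the latter equals $p_{k+1}(u)-1$ by the definition of $p_{k+1}$). The base case $k=0$ reads $f(1)=1$ and $f(2)=u=1+(u-1)$. For the inductive step, from $f(2k+2)=1+(u-1)p_k(u)$ one reads off $f(2k+2)+1=2+(u-1)p_k(u)=p_{k+1}(u)$ and $f(2k+2)-1=(u-1)p_k(u)$, so~\eqref{Erecur} at $n=2k+2$ gives
\[
f(2k+3)=f(2k+1)\cdot\frac{f(2k+2)+1}{f(2k+2)-1}=\frac{p_k(u)}{(u-1)^k}\cdot\frac{p_{k+1}(u)}{(u-1)\,p_k(u)}=\frac{p_{k+1}(u)}{(u-1)^{k+1}},
\]
the asserted value of $f\bigl(2(k+1)+1\bigr)$. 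Writing $S_m:=\sum_{j=0}^{m}(u-1)^j$, formula~\eqref{Egeom} gives $p_{k+1}(u)-(u-1)^{k+1}=2S_k$ and $p_{k+1}(u)+(u-1)^{k+1}=2S_{k+1}$, hence $f(2k+3)\pm 1=(p_{k+1}(u)\pm(u-1)^{k+1})/(u-1)^{k+1}$; then~\eqref{Erecur} at $n=2k+3$ gives $f(2k+4)=f(2k+2)\cdot S_{k+1}/S_k$, and it remains to verify $\bigl(1+(u-1)p_k(u)\bigr)S_{k+1}=\bigl(1+(u-1)p_{k+1}(u)\bigr)S_k$.

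This last identity is where the only genuine computation sits, and I expect it to be the main (if modest) obstacle — it really needs~\eqref{Egeom}, not just the recurrence for $p_k$. Substituting $p_k(u)=(u-1)^k+2S_{k-1}$, $p_{k+1}(u)=(u-1)^{k+1}+2S_k$ and using $(u-1)S_{m-1}=S_m-1$, both sides collapse, after cancelling $2S_kS_{k+1}$, to $S_{k+1}\bigl((u-1)^{k+1}-1\bigr)=S_k\bigl((u-1)^{k+2}-1\bigr)$; this is immediate from $(u-2)S_m=(u-1)^{m+1}-1$, holds trivially at $u=2$, and may anyway be read as an identity in $\mathbb{Q}(u)$. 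This gives $f(2k+4)=1+(u-1)p_{k+1}(u)=p_{k+2}(u)-1$, closing the induction. The only further point to watch is that no denominator vanishes: $u-1=f(2)-f(1)\ne0$, and $(u-1)p_k(u)=f(2k+2)-f(1)$, $2S_k=(u-1)^{k+1}\bigl(f(2k+3)-f(1)\bigr)$ are nonzero by the injectivity of $f$ from Theorem~\ref{Tmult} — at least for $k$ in the range where~\eqref{Erecur} can be iterated forward from the seed (in characteristic $p$, until an index becomes $\equiv 1$), which is all that the paper uses.
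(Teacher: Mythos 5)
Your proposal is correct and follows essentially the same route as the paper's proof: induction on $k$ via the recurrence~\eqref{Erecur}, with the odd-index step being an immediate cancellation and the even-index step reducing to a polynomial identity settled by the geometric-series form~\eqref{Egeom} of $p_k$ (your $(u-2)S_m=(u-1)^{m+1}-1$ is the same fact the paper invokes when it divides by $p_k$). The only addition beyond the paper is your explicit check that the denominators $f(n)-1$ are nonzero by injectivity, which is a worthwhile remark but not a different argument.
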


\begin{proof}
By induction on $k$. The $k=0$ case of~\eqref{Egeom} is trivial.
For the induction step, we omit the easy calculation for $p_{k+1}$ in
terms of $p_k$, given that $p_k(u) + (u-1)^k$ is a geometric series.

For the $f$-values, if we know $f(2k-1), f(2k)$ for
some $k \geq 1$, then
\[
f(2k+1) = f(2k-1) \cdot \frac{f(2k)+1}{f(2k)-1} =
\frac{p_{k-1}(u)}{(u-1)^{k-1}} \cdot
\frac{1+(u-1)p_{k-1}(u)+1}{(u-1)p_{k-1}(u)} = \frac{p_k(u)}{(u-1)^k}
\]
as claimed. Next,
\[
f(2k+2) = f(2k) \cdot \frac{f(2k+1)+1}{f(2k+1)-1} = (p_k(u)-1) \cdot
\frac{p_k(u) + (u-1)^k}{p_k(u) - (u-1)^k},
\]
and we are to show this equals $1 + (u-1)p_k(u)$, i.e.,
\[
(p_k(u) - 1)(p_k(u) + (u-1)^k) = (p_k(u)-(u-1)^k)(1 + (u-1)p_k(u)).
\]
We verify this on the level of polynomials: adding $(u-1)^k$ to both
sides, it suffices to show the equality of the remaining terms divided
by $p_k(u)$. That is, we need to verify that
\[
((u-1) - 1)p_k(u) = (u-1)^k - 1 + (u-1)^{k+1} - 1,
\]
and this follows from~\eqref{Egeom} as $p_k(u)$ is a sum of two geometric
series in $u-1$.
\end{proof}

\section{Proof of the main result}

We now return to the proof of Theorem~\ref{Tmain}. Interestingly, the
unique exceptional field $\mathbb{F}_5$ emerges naturally from our proof.
Also, as promised, we only use Theorem~\ref{Tmult} from above.

\subsection{Proof of Theorem~\ref{Tmain} for finite fields}

The meat of the proof is in computing $SD(\mathbb{F})$ for $\mathbb{F}$ a
finite field. We first dispense with the exceptional case $\mathbb{F} =
\mathbb{F}_5$. In this case, Theorem~\ref{Tmult} gives $f(w) = w$ for $w
= 0, \pm 1$. As the SD-map $f$ is injective, it must fix or interchange
$2,3$. The former map is the (only) automorphism (and hence an SD-map)
${\rm id}_{\mathbb{F}_5}$, while the other map is precisely $f(w) \equiv
w^3$, which is an SD-map by Example~\ref{Ex5}.

Here is some \textbf{notation} for the remainder of this section. Fix an
odd prime $p>2$, and given a prime power $q = p^\ell$, let $\mathbb{F}_q$
denote the field with $q$ elements.

We now proceed. Fix a finite field $\mathbb{F}_q$ with $q$ odd, and a
generator $\theta$ of the cyclic group $\mathbb{F}_q^\times$. As $f :
\mathbb{F}_q \to \mathbb{F}_q$ is multiplicative and fixes $0,1$, it is
completely determined by $f(\theta)$; moreover, injectivity yields
$f(\theta) = \theta^k$ for some $k \geq 1$ coprime to
$|\mathbb{F}_q^\times| = q-1$. Thus $0 < k < q-1$, and $f(w) \equiv w^k$
on $\mathbb{F}_q$ by multiplicativity (and since $f(0) = 0$).

Now if $x=0$ then~\eqref{Esumdiff} is trivially true, so suppose $x \neq
0$ and replace $(x,y)$ by $(1, w = yx^{-1})$ (since $f$ is
multiplicative), to get that $f(w) = w^k$ satisfies:
\begin{equation}\label{Eratio}
\frac{(1+w)^k}{(1-w)^k} = \frac{1+w^k}{1-w^k}, \qquad \forall w \neq 1.
\end{equation}

Clearing the denominators, we obtain:
\[
p_k(w) := w^k ((1+w)^k + (1-w)^k) - ((1+w)^k - (1-w)^k) = 0.
\]

This polynomial clearly vanishes at $w=1$ as well, hence on all of
$\mathbb{F}_q$. Moreover, using the binomial theorem (and since $k$ is
odd, being coprime to $q-1$), it simplifies to
\[
2 w^k \left[ 1 + \binom{k}{2} w^2 + \cdots + \binom{k}{k-1} w^{k-1}
\right] - 2 \left[ \binom{k}{1} w^1 + \cdots + \binom{k}{k-2} w^{k-2} +
\binom{k}{k} w^k \right] = 0.
\]

The first and last terms cancel, and $p$ is odd so we can divide
throughout by $2$. This polynomial has degree $2k-1$ and $q$ roots in
$\mathbb{F}_q$. We now consider three cases for $k$.
\begin{enumerate}
\item First suppose $0 < k < (q+1)/2$, so $2k-1 < q$. Then $p_k(w) \equiv
0$ in $\mathbb{F}_q[w]$, so by symmetry and since $k$ is odd, the
binomial coefficients $\binom{k}{j} = 0 \mod p$ for $0 < j < k$. An
application of Lucas's theorem~\cite{Lucas} yields that $k$ must be a
power of $p$. That is, $k \in \{ 1, p, p^2, \dots, q/p \}$, since $q/p <
(q+1)/2$.

Conversely, if $k$ belongs to this set of $p$-powers, then indeed $w
\mapsto w^k$ is a field automorphism of $\mathbb{F}_q$, being a power of
the Frobenius. Thus, we are done if $k < (q+1)/2$.

\item The second case is that $(q+1)/2 < k < q-1$. We claim this is not
possible. Indeed, from~\eqref{Eratio} one can derive
\[
\frac{(1-w)^{-k}}{(1+w)^{-k}} = \frac{w^{-k} + 1}{w^{-k} - 1}, \qquad
\forall w \neq 0, \pm 1.
\]
But as $w^{q-1} \equiv 1$ on $\mathbb{F}_q^\times$, this translates into
\[
\frac{(1-w)^{q-1-k}}{(1+w)^{q-1-k}} = \frac{w^{q-1-k} + 1}{w^{q-1-k} -
1}, \qquad \forall w \neq 0, \pm 1.
\]

Writing $l := q-1-k$ (which is odd since $k$ is odd), this yields:
\[
\frac{(1-w)^l}{(1+w)^l} = \frac{w^l + 1}{w^l - 1}, \qquad \forall w \neq
0, \pm 1.
\]
Cross-multiplying, we get
\[
q_k(w) := w^l ((1+w)^l - (1-w)^l) + (1+w)^l + (1-w)^l = 0, \qquad \forall
w \neq 0,
\pm 1.
\]

Since $l$ is odd, $q_k$ is a polynomial of degree $2l$ (with leading term
$2 w^{2l} \neq 0$), which has $q-3$ distinct roots. But this contradicts
the fact that
\[
k > (q+1)/2 \quad \implies \quad 2l = 2q-2-2k < q-3.
\]

\item Thus, the only remaining case is $k = (q+1)/2$. (Notice, this
covers the exceptional case of $q=p=5$ and $k=3$.) Now work again with
the generator $\theta$ of $\mathbb{F}_q^\times$:
\[
f(\theta)^2 = f(\theta^2) = \theta^{2k} = \theta^{q+1} = \theta^q \cdot
\theta = \theta^2,
\]
since $w^q \equiv w$ on $\mathbb{F}_q$. But then $f(\theta) = \pm
\theta$. If $f(\theta) = \theta$ then $f \equiv {\rm id}_{\mathbb{F}_q}$
by multiplicativity. Otherwise $f(\theta) = -\theta$, so that $f(w) = \pm
w$ for each $w \in \mathbb{F}_q$.
Then set $x=1, y=\theta$ in~\eqref{Esumdiff} to get:
\[
\pm \frac{1+\theta}{1-\theta} = f \left( \frac{1+\theta}{1-\theta}
\right) = \frac{1+f(\theta)}{1-f(\theta)} = \frac{1-\theta}{1+\theta}.
\]

Solving both possible equations (via cross-multiplying) gives:
\[
4 \theta = 0 \qquad \text{or} \qquad 2(1+\theta^2) = 0.
\]
The former is not possible, so we get $1+\theta^2 = 0$, whence $\theta^2
= -1$ and so the generator $\theta$ is a primitive fourth root of unity
in $\mathbb{F}_q$. But then we recover precisely our exceptional case of
$q=p=5$! And here, the map $\theta \mapsto -\theta$ (for $\theta$ equal
to either $2$ or $3 \mod 5$) is indeed the map $w \mapsto w^3$, which is
an SD-map. \qed
\end{enumerate}

\subsection{Completing the proof for algebraic extensions}

To conclude the proof of Theorem~\ref{Tmain} (for algebraic field
extensions), we will require the following lemma.

\begin{lemma}\label{Lsubfield}
Suppose $p>2$ is odd and $q = p^\ell$ is a prime power (with $\ell \geq
1$). Let $f : \mathbb{F}_q \to \widetilde{\mathbb{F}}$ be an SD-map, with
${\rm char}(\widetilde{\mathbb{F}}) = p$. Then $f(\mathbb{F}_q)$ forms
the unique subfield of order $q$ inside $\widetilde{\mathbb{F}}$.
\end{lemma}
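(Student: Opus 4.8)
The plan is to show that $f(\mathbb{F}_q)$ is a subfield of $\widetilde{\mathbb{F}}$ of cardinality exactly $q$, and then invoke the standard uniqueness of subfields of finite order inside any field of characteristic $p$. Since $f$ is injective by Theorem~\ref{Tmult}, the image has exactly $q$ elements, so only the closure under the field operations needs attention. Closure under multiplication and inverses is immediate: $f$ is multiplicative and fixes $0$ and $1$, so $f(\mathbb{F}_q^\times)$ is the image of a finite group under a group homomorphism, hence a subgroup of $\widetilde{\mathbb{F}}^\times$; together with $f(0)=0$ this gives a multiplicatively closed set containing $0,1$ and closed under inverses. The real content is additive closure.

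For additive closure I would first use Theorem~\ref{Tchar} — or rather its proof technique, since $p$ may be small — to pin down the structure. Actually the cleanest route: by the finite-field case of Theorem~\ref{Tmain} already proved (or directly), $f$ restricted to $\mathbb{F}_q$ is of the form $w \mapsto w^{p^j}$ for some $j$ unless $q=5$, i.e.\ a field homomorphism; but wait, that argument was carried out for $f:\mathbb{F}_q\to\mathbb{F}_q$, so I must be careful that here the codomain is an arbitrary $\widetilde{\mathbb{F}}$ of characteristic $p$. The key observation is that $f(\mathbb{F}_q)$ generates a finite subfield of $\widetilde{\mathbb{F}}$: namely let $K \subseteq \widetilde{\mathbb{F}}$ be the subfield generated by $f(\mathbb{F}_q^\times)$ over $\mathbb{F}_p$. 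Since $f(\mathbb{F}_q^\times)$ is a finite multiplicative group, each of its elements is algebraic over $\mathbb{F}_p$ (a root of $t^{q-1}-1$), so $K$ is a finite field, say $K = \mathbb{F}_{p^m}$, and $f$ factors as an SD-map $\mathbb{F}_q \to \mathbb{F}_{p^m}$. Now I can run the argument of the finite-field proof of Theorem~\ref{Tmain} almost verbatim: writing $f(\theta) = \theta'$ a generator of a cyclic subgroup of order $q-1$ in $\mathbb{F}_{p^m}^\times$, multiplicativity and injectivity force $f$ to be determined by where it sends $\theta$, and the clearing-denominators polynomial identity~\eqref{Eratio} (now with $1+w$, $1-w$, $w$ replaced by $f$-values) must hold for all $w \in \mathbb{F}_q$. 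The upshot of that analysis is that $f$ agrees with an embedding $\mathbb{F}_q \hookrightarrow \mathbb{F}_{p^m}$ (the $k<(q+1)/2$ and excluded $k>(q+1)/2$ cases), except possibly in the degenerate $q=5$ case where $f=w^3$ — but $w\mapsto w^3$ on $\mathbb{F}_5$ is still a bijection of $\mathbb{F}_5$ onto itself, hence its image is $\mathbb{F}_5$, a subfield. In every case $f(\mathbb{F}_q)$ is a subfield of order $q$.

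I expect the main obstacle to be making rigorous the claim that the finite-field argument transfers when the codomain is a priori larger — in particular handling the step $f(\theta)^2 = f(\theta^2) = (\theta')^{q+1}$, which in the original proof used $w^q \equiv w$ on $\mathbb{F}_q$; here one needs instead that $\theta'$ has multiplicative order dividing $q-1$ (which it does, since $f$ is a group homomorphism out of a group of exponent $q-1$), so $(\theta')^{q-1} = 1$ and the same computation goes through. A subtle point is ruling out that the $k > (q+1)/2$ case could behave differently over a larger field; but the polynomial-degree contradiction there only used that the relevant polynomial in $\mathbb{F}_p[w]$ has too many roots in $\mathbb{F}_q$ — an inequality about $q$ and the degree, with coefficients in the prime field — so it survives verbatim. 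Once $f(\mathbb{F}_q)$ is known to be a subfield of order $q$ inside $\widetilde{\mathbb{F}}$, uniqueness is the elementary fact that the set of roots of $t^q - t$ in $\widetilde{\mathbb{F}}$ is the unique such subfield, which I would state in one line.

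Alternatively — and perhaps more economically — I would avoid re-running the case analysis: once $K = \mathbb{F}_{p^m}$ is identified as above and $f : \mathbb{F}_q \to \mathbb{F}_{p^m}$ is an SD-map with image generating $K$, one has $q \mid p^m - 1$ is false in general, so instead note $f(\mathbb{F}_q^\times)$ is a subgroup of $\mathbb{F}_{p^m}^\times$ of order $q-1$, forcing $q - 1 \mid p^m - 1$; combined with the fact that $f$ is an SD-map this is exactly the setting to which the already-proven finite-field statement applies after possibly enlarging the target to $\mathbb{F}_{q}$ itself (since a cyclic group of order $q-1$ lives inside $\mathbb{F}_{p^m}^\times$ precisely when $\mathbb{F}_q \subseteq \mathbb{F}_{p^m}$). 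That last equivalence — $q-1 \mid p^m-1 \iff \ell \mid m$ when $q = p^\ell$ is not quite an iff, so I would be cautious and fall back on the direct polynomial argument if this shortcut gets delicate. Either way the theorem follows.
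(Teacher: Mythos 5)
Your primary route has a genuine gap: you propose to ``run the argument of the finite-field proof of Theorem~\ref{Tmain} almost verbatim'' after factoring $f$ through a finite field $\mathbb{F}_{p^m}$. But that argument begins by writing $f(w) \equiv w^k$ on $\mathbb{F}_q$, i.e.\ it uses that $f(\theta)$ is a \emph{power of $\theta$}, which presupposes that $f$ maps $\mathbb{F}_q$ into itself --- precisely what the lemma is trying to establish. With codomain $\mathbb{F}_{p^m}$ one only knows $f(\theta) = \eta^j$ for a generator $\eta$ of $\mathbb{F}_{p^m}^\times$, and then the identity~\eqref{Eratio} ``with $f$-values substituted'' is no longer a polynomial identity in $w$ of controlled degree, so the degree-counting in cases (1)--(3) does not transfer. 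As written, this route is circular.

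The good news is that your ``alternative'' paragraph essentially \emph{is} the paper's proof, and you should have committed to it: $f|_{\mathbb{F}_q^\times}$ is an injective group homomorphism into $\widetilde{\mathbb{F}}^\times$, so its image consists of $q-1$ distinct $(q-1)$st roots of unity; together with $f(0)=0$ this gives $q$ distinct roots of $w^q - w$ in $\widetilde{\mathbb{F}}$, hence \emph{all} of them, and the root set of $w^q - w$ in any characteristic-$p$ field is its unique subfield of order $q$ (additive closure is free from $(a+b)^q = a^q + b^q$). No case analysis, no additivity argument, and no need to identify $\mathbb{F}_{p^m}$ first. Two smaller points: the divisibility you hedge on, $p^\ell - 1 \mid p^m - 1 \iff \ell \mid m$, is in fact a standard iff (reduce $m$ mod $\ell$), though the clean argument above never needs it; and your framing of additive closure as ``the real content'' is what pushed you toward the heavy machinery --- it comes for free once the image is identified with the roots of $w^q - w$.
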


\begin{proof}
Let $\theta \in \mathbb{F}_q^\times$ be a generator. By
Theorem~\ref{Tmult}, $f$ is multiplicative and injective, and fixes
$0,1$, so $f(1) = 1, f(\theta), f(\theta^2), \dots, f(\theta^{q-2})$ are
distinct in $\widetilde{\mathbb{F}}$, and are $(q-1)$st roots of unity.
Hence they and $f(0) = 0$ are the only $q$ roots of the polynomial
$w^q-w$ in $\widetilde{\mathbb{F}}$. But it is well-known that these
roots inside any characteristic-$p$ field $\widetilde{\mathbb{F}}$, form
its unique $q$-element subfield.
\end{proof}

Now we complete the proof of Theorem~\ref{Tmain}. Suppose $\mathbb{F}_p
\subset \mathbb{F} \subseteq \overline{\mathbb{F}_p}$ is an infinite
algebraic field extension of $\mathbb{F}_p$, for some $p>2$ odd. Via
Theorem~\ref{Tmult}, it suffices to show that $f$ is
(a)~additive and
(b)~surjective.

We first place all primes (including $p=5$) on an ``equal'' footing:
choose any element $\alpha \in \mathbb{F} \setminus \mathbb{F}_p$; then
$\alpha$ is algebraic, so $\mathbb{F}_p(\alpha)$ is a finite extension,
say $\mathbb{F}_q$ with $q>p$. Moreover, $\mathbb{F}$ is an (infinite)
extension of $\mathbb{F}_q$.

Next, Lemma~\ref{Lsubfield} says that the restriction $f : \mathbb{F}_q
\to \mathbb{F}$ (with $q>5$) bijectively maps $\mathbb{F}_q$ onto itself.
By the preceding subsection, $f|_{\mathbb{F}_q}$ is an automorphism, and
hence additive on $\mathbb{F}_q$.

Now we show that $f$ is in fact additive on all of $\mathbb{F}$. Take any
$a,b$ not both in $\mathbb{F}_q$; as $a,b$ are algebraic, the extension
$\mathbb{E} := \mathbb{F}_q(a,b)$ is a finite field. Again by
Lemma~\ref{Lsubfield}, $f$ sends $\mathbb{E}$ bijectively onto itself.
But then by the preceding subsection, $f$ is additive on $\mathbb{E}$,
and so $f(a+b) = f(a)+f(b)$. This shows (a)~additivity.

The proof of (b)~surjectivity is similar. Let $a \in \mathbb{F} \setminus
\mathbb{F}_q$; then $\mathbb{F}_q(a)$ is a finite field. By
Lemma~\ref{Lsubfield}, it is mapped bijectively onto itself by $f$, and
so $f$ is onto. \qed

\subsection{A parallel result}

The proof of Theorem~\ref{Tmain} (for finite fields) in fact shows a
parallel result: \textit{
If $\widetilde{\mathbb{F}}$ is a field of characteristic $p>2$, and $f :
\mathbb{F}_p \to \widetilde{\mathbb{F}}$ is an SD-map, then either $p
\neq 5$ and $f \equiv {\rm id}$, or $p=5$ and $f(w) \equiv w$ or $w^3$.}

Using Lemma~\ref{Lsubfield}, this statement extends to finite fields:

\begin{proposition}\label{Pprev}
Suppose $\widetilde{\mathbb{F}}$ is a field of characteristic $p>2$, and
$q = p^\ell$ for some $\ell \geq 1$. Let $f : \mathbb{F}_q \to
\widetilde{\mathbb{F}}$ be an SD-map. Then $f$ is a field automorphism of
$\mathbb{F}_q$, except for $q=p=5$, in which case $f$ can alternately be
only the map $w \mapsto w^3$.
\end{proposition}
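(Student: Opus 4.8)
The plan is to reduce Proposition~\ref{Pprev} to the already-established results via Lemma~\ref{Lsubfield}. First I would apply Theorem~\ref{Tmult} to conclude that $f \colon \mathbb{F}_q \to \widetilde{\mathbb{F}}$ is injective, multiplicative, odd, and fixes $0,1$; in particular $\widetilde{\mathbb{F}}$ has characteristic $p$ so Lemma~\ref{Lsubfield} applies and tells us that $f(\mathbb{F}_q)$ is \emph{the} unique $q$-element subfield of $\widetilde{\mathbb{F}}$. Identifying that subfield with $\mathbb{F}_q$, we may regard $f$ as a surjective (hence bijective) self-map of $\mathbb{F}_q$ satisfying~\eqref{Esumdiff}, i.e.\ an element of $SD(\mathbb{F}_q)$.

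Next I would simply invoke the finite-field case of Theorem~\ref{Tmain}, which was proved in the preceding subsection: for $\mathbb{F}_q \neq \mathbb{F}_5$ we have $SD(\mathbb{F}_q) = \Aut(\mathbb{F}_q)$, so $f$ is a field automorphism; and for $\mathbb{F}_q = \mathbb{F}_5$ we have $SD(\mathbb{F}_5) = \{ {\rm id}, w \mapsto w^3 \}$, so $f$ is either the identity automorphism or the map $w \mapsto w^3$. This is exactly the conclusion of Proposition~\ref{Pprev}, bearing in mind that $q = p^\ell = 5$ forces $\ell = 1$, so the exceptional case really is $q = p = 5$. One small point worth spelling out is that the ``restriction-then-corestriction'' identification is legitimate precisely because Lemma~\ref{Lsubfield} pins down $f(\mathbb{F}_q)$ as a subfield isomorphic to $\mathbb{F}_q$ (not merely as a subset), so that the transported functional equation is genuinely the SD-equation on $\mathbb{F}_q$.

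In fact the italicized statement preceding the proposition—the $q = p$ (prime-field) case—is not even needed separately: it is the $\ell = 1$ instance of the argument above, and indeed one sees it already falls out of Case~(3) in the finite-field proof (where $k = (q+1)/2$ forces $q = p = 5$). So the whole proof is essentially two sentences: ``By Theorem~\ref{Tmult} and Lemma~\ref{Lsubfield}, $f$ is a bijection of $\mathbb{F}_q$ onto itself satisfying~\eqref{Esumdiff}, i.e.\ $f \in SD(\mathbb{F}_q)$; now apply the finite-field case of Theorem~\ref{Tmain}.''

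There is essentially no obstacle here—the proposition is a corollary, and the only thing to be careful about is not to re-prove the finite-field classification but to cite it cleanly, and to make explicit that $\widetilde{\mathbb{F}}$ having characteristic $p$ is what licenses the use of Lemma~\ref{Lsubfield}. If one wanted a fully self-contained write-up that did \emph{not} quote Theorem~\ref{Tmain}'s finite-field case verbatim, the real content would be re-running the three-case analysis on $k$ (with $f(w) = w^k$, $\gcd(k, q-1) = 1$); but since that analysis is already in the paper, reproducing it would be redundant.
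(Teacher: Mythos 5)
Your proposal is correct and matches the paper's own argument: the paper likewise derives Proposition~\ref{Pprev} by using Lemma~\ref{Lsubfield} to see that $f$ maps $\mathbb{F}_q$ bijectively onto the unique $q$-element subfield of $\widetilde{\mathbb{F}}$, hence may be regarded as a self-SD-map of $\mathbb{F}_q$, and then invoking the finite-field case of Theorem~\ref{Tmain}. Your extra remark that the transported map genuinely satisfies~\eqref{Esumdiff} (because the identification is a field isomorphism) is the right point to make explicit.
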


In turn, this is a special case of a parallel result to
Theorem~\ref{Tmain}, which we now show for completeness:

\begin{theorem}
Suppose $\widetilde{\mathbb{F}}$ is a field of characteristic $p>2$, and
$\mathbb{F}$ is algebraic over $\mathbb{F}_p$. Let $f : \mathbb{F} \to
\widetilde{\mathbb{F}}$ be an SD-map. Then $f$ is a field homomorphism of
$\mathbb{F}$ onto its image, except when $\mathbb{F} = \mathbb{F}_5$, in
which case $f$ can alternately be only the map $w \mapsto w^3$.
\end{theorem}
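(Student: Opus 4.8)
The plan is to reduce the general algebraic-extension statement to the finite-field case already handled in Proposition~\ref{Pprev}, using exactly the same ``localize to finite subfields'' technique that proved Theorem~\ref{Tmain} for infinite algebraic extensions of $\mathbb{F}_p$. First I would dispose of the case $\mathbb{F} = \mathbb{F}_p$ directly: this is the parallel statement quoted just before Proposition~\ref{Pprev}, giving $f \equiv \mathrm{id}$ for $p \neq 5$ and $f(w) \equiv w$ or $w^3$ for $p = 5$ --- in all cases a field homomorphism onto its image, with the stated exception. So I may henceforth assume $\mathbb{F} \supsetneq \mathbb{F}_p$.

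Next, by Theorem~\ref{Tchar} applied carefully --- or more simply, by directly invoking Theorem~\ref{Tmult} --- every SD-map $f$ is injective, multiplicative, odd, and fixes $0,1$; what remains to show is that $f$ is additive on $\mathbb{F}$ (surjectivity onto the image is automatic once we have a ring homomorphism, since the image is then a subfield). Pick any $\alpha \in \mathbb{F} \setminus \mathbb{F}_p$; then $\mathbb{F}_p(\alpha)$ is a finite field $\mathbb{F}_q$ with $q > p$, so in particular $q \neq 5$ (here is where the ``put all primes on equal footing'' trick pays off, exactly as in the Theorem~\ref{Tmain} proof). Now given arbitrary $a, b \in \mathbb{F}$, the subfield $\mathbb{E} := \mathbb{F}_q(a,b)$ is finite (algebraic and finitely generated over a finite field), and $\mathbb{E}$ has order $> 5$. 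Restricting $f$ to $\mathbb{E}$ gives an SD-map $f|_{\mathbb{E}} \colon \mathbb{E} \to \widetilde{\mathbb{F}}$ with $\mathrm{char}(\widetilde{\mathbb{F}}) = p$, so by Proposition~\ref{Pprev} (whose exceptional case $q = p = 5$ is excluded here) the map $f|_{\mathbb{E}}$ is a field automorphism of $\mathbb{E}$ --- in particular additive. Hence $f(a+b) = f(a) + f(b)$. Since $a, b$ were arbitrary, $f$ is additive on $\mathbb{F}$, and therefore a field homomorphism of $\mathbb{F}$ onto its image.

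I should also separately record the genuinely exceptional field $\mathbb{F} = \mathbb{F}_5$ itself: here Theorem~\ref{Tmult} forces $f$ to fix $0, \pm 1$ and, being injective, either to fix $2,3$ (the identity automorphism) or to interchange them (the map $w \mapsto w^3$, which is an SD-map by Example~\ref{Ex5}). This matches the stated dichotomy. Note that for every \emph{other} algebraic extension $\mathbb{F}$ of $\mathbb{F}_5$, namely $\mathbb{F}_{5^\ell}$ with $\ell \geq 2$ or an infinite extension, the argument of the previous paragraph applies verbatim (choosing $\alpha$ outside $\mathbb{F}_5$ gives $q = 5^m$ with $m \geq 2$, so $q \neq 5$), and no $w^3$-type exception survives --- consistent with Theorem~\ref{Tmain}.

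The only real content here is Proposition~\ref{Pprev}, which in turn rests on the finite-field case of Theorem~\ref{Tmain}; everything else is bookkeeping. Thus I anticipate no substantive obstacle --- the main (minor) point to get right is that $\mathbb{F}_q(a,b)$ is genuinely a \emph{finite} field for arbitrary $a,b$ in an algebraic extension, which holds because each of $a,b$ is algebraic over the finite field $\mathbb{F}_q$, so $\mathbb{F}_q(a,b)$ is a finite-dimensional $\mathbb{F}_q$-vector space and hence finite; and that the relevant finite subfields all have order strictly greater than $5$, which is exactly what the choice of $\alpha \notin \mathbb{F}_p$ (hence $q > p$, so $q \geq p^2 \geq 9$ when $p = 5$ ... more precisely $q = p^{\deg} > p$) guarantees. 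Modulo these remarks, the proof is a two-paragraph reduction.
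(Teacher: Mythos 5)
Your proposal is correct and follows essentially the same route as the paper: reduce to the finite case via Lemma~\ref{Lsubfield}/Proposition~\ref{Pprev}, and for $\mathbb{F} \supsetneq \mathbb{F}_p$ deduce additivity by localizing to the finite subfields $\mathbb{F}_q(a,b)$ with $q>p$, exactly as in the proof of Theorem~\ref{Tmain}. The only (immaterial) differences are organizational: you treat $\mathbb{F}=\mathbb{F}_p$ and $\mathbb{F}=\mathbb{F}_5$ as separate base cases rather than folding all finite fields into one appeal to Theorem~\ref{Tmain}.
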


\begin{proof}
If $\mathbb{F}$ is finite, then we are (reduced to
Proposition~\ref{Pprev}, and) done by Lemma~\ref{Lsubfield}: $f$ may be
taken to be a self-SD-map of $\mathbb{F}$, and then we are done by
Theorem~\ref{Tmain}.

Else $\mathbb{F}$ is infinite. By Theorem~\ref{Tmult}, it remains to show
$f$ is additive. This is shown verbatim to the proof of
Theorem~\ref{Tmain}: first place all primes on an ``equal'' footing, so
$\mathbb{F} \supset \mathbb{F}_q$ for some $q>p$. Now show $f$ is
additive on $\mathbb{F}_q$ (via Lemma~\ref{Lsubfield} and
Theorem~\ref{Tmain}), and then on $\mathbb{F}_q(a,b)$ for all $a,b \in
\mathbb{F}$.
\end{proof}

\section{Additional characterizations of $\mathbb{F}_5$}

The final case~(3) in the proof of Theorem~\ref{Tmain} reveals the
uniqueness of the finite field $\mathbb{F}_5$. We distill the analysis
into multiple characterizations of $\mathbb{F}_5$ -- unique among
not just finite fields but also algebraic ones.

\begin{corollary}
Let $\mathbb{F}$ be a field of odd characteristic that is algebraic over
its prime subfield. Then the following are equivalent.
\begin{enumerate}
\item $\mathbb{F} = \mathbb{F}_5$.
\item $\Aut(\mathbb{F}) \subsetneq SD(\mathbb{F})$.
\item (For finite $\mathbb{F}$:) The map which fixes all squares in
$\mathbb{F}$ and sends all non-squares to their additive inverses is an
SD-map.
\item $\sqrt{-1}$ generates $\mathbb{F}^\times$.
\end{enumerate}
\end{corollary}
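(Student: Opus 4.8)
The plan is to establish the cycle of implications $(1)\Rightarrow(2)\Rightarrow(3)\Rightarrow(4)\Rightarrow(1)$ (with the obvious understanding that $(3)$ only makes sense, and is only asserted, for finite $\mathbb{F}$, and that for infinite algebraic $\mathbb{F}$ one closes the cycle through $(2)$ and $(4)$ directly). Most of the work is already done: Example~\ref{Ex5} and Theorem~\ref{Tmain} together handle $(1)\Rightarrow(2)$, since over $\mathbb{F}_5$ the map $w\mapsto w^3$ lies in $SD(\mathbb{F}_5)\setminus\Aut(\mathbb{F}_5)$. For $(2)\Rightarrow(1)$ one simply invokes Theorem~\ref{Tmain}: if $\mathbb{F}$ is algebraic over its prime subfield of odd characteristic and $\mathbb{F}\neq\mathbb{F}_5$, then $SD(\mathbb{F})=\Aut(\mathbb{F})$, so the only way to get a strict containment is $\mathbb{F}=\mathbb{F}_5$. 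This already shows $(1)\Leftrightarrow(2)$, and in the infinite case these are the only two conditions in play, so that case is finished.

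It remains to weave in $(3)$ and $(4)$ for finite $\mathbb{F}=\mathbb{F}_q$, $q$ odd. For $(1)\Rightarrow(3)$: over $\mathbb{F}_5$ the squares are $\{0,1,4\}$ and the non-squares are $\{2,3\}$, and $w\mapsto w^3$ fixes $0,1,4$ and swaps $2\leftrightarrow3$ (i.e.\ sends each non-square to its negative), so the map described in $(3)$ is exactly the SD-map of Example~\ref{Ex5}. For $(3)\Rightarrow(4)$: suppose the map $g$ which fixes squares and negates non-squares is an SD-map. Let $\theta$ generate $\mathbb{F}_q^\times$; since $\theta$ is a non-square (a generator of a cyclic group of even order is a non-square), $g(\theta)=-\theta$, while $g$ is multiplicative by Theorem~\ref{Tmult}. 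Now run the final computation of case~(3) in the proof of Theorem~\ref{Tmain} verbatim: setting $x=1,y=\theta$ in~\eqref{Esumdiff} gives
\[
-\frac{1+\theta}{1-\theta} \;=\; g\!\left(\frac{1+\theta}{1-\theta}\right) \;=\; \frac{1+g(\theta)}{1-g(\theta)} \;=\; \frac{1-\theta}{1+\theta},
\]
and cross-multiplying yields $4\theta=0$ or $2(1+\theta^2)=0$; the former is impossible in odd characteristic, so $\theta^2=-1$, i.e.\ $\sqrt{-1}$ is a generator of $\mathbb{F}^\times$, which is $(4)$. (One should also note $\sqrt{-1}$ exists and lies in $\mathbb{F}$ here, forced by $\theta^2=-1$.)

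Finally $(4)\Rightarrow(1)$: if $\sqrt{-1}$ generates $\mathbb{F}^\times$, then $|\mathbb{F}^\times|$ divides the order of $\sqrt{-1}$, which is $4$; so $q-1\in\{1,2,4\}$, and since $q$ is an odd prime power this forces $q=5$ (the options $q-1=1,2$ give $q=2,3$, and $\sqrt{-1}$ has order $4$ only when $4\mid q-1$, leaving $q-1=4$). Hence $\mathbb{F}=\mathbb{F}_5$. I do not expect any genuine obstacle here; the only thing to be careful about is bookkeeping — making sure the finite-only condition $(3)$ is correctly quantified, confirming that a generator of $\mathbb{F}_q^\times$ is always a non-square so that $g$ acts on it by negation, and checking that $(4)$ as stated presupposes $\sqrt{-1}\in\mathbb{F}$ (which it does, since ``$\sqrt{-1}$ generates $\mathbb{F}^\times$'' only parses if $\sqrt{-1}\in\mathbb{F}$). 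All the substantive content is inherited from Theorem~\ref{Tmain}, Theorem~\ref{Tmult}, and Example~\ref{Ex5}.
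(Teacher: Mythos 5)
Your proposal is correct and follows essentially the same route as the paper: Theorem~\ref{Tmain} gives $(1)\Leftrightarrow(2)$, direct verification gives $(1)\Rightarrow(3),(4)$, the order-of-$\sqrt{-1}$ argument gives $(4)\Rightarrow(1)$, and the $x=1$, $y=\theta$ computation from case~(3) of the proof of Theorem~\ref{Tmain} (together with the observation that a generator of $\mathbb{F}_q^\times$ is a non-square) handles $(3)$. The only cosmetic difference is that you route $(3)\Rightarrow(4)\Rightarrow(1)$ where the paper proves $(3)\Rightarrow(1)$ and $(4)\Rightarrow(1)$ separately, and in $(4)\Rightarrow(1)$ you should say $|\mathbb{F}^\times|$ \emph{equals} (not merely divides) the order of $\sqrt{-1}$, which is $4$.
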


As above, this includes all finite fields of odd order. Also note that if
in assertion~(3) we allow $\mathbb{F}$ to be infinite, then the
characterization fails. For instance, suppose $\mathbb{F} =
\overline{\mathbb{F}_p}$, or the ``constructible numbers'' over
$\mathbb{F}_p$ (which form an infinite field closed under taking square
roots). Then the map in~(3) is the identity automorphism, which is an
SD-map.

\begin{proof}
That $(1) \Leftrightarrow (2)$ is Theorem~\ref{Tmain}, and that $(1)
\Rightarrow (3),(4)$ is easily verified. Conversely, if~(4) holds then
$\mathbb{F}^\times$ has order $4 = o(\sqrt{-1})$, so $|\mathbb{F}| = 5$
as desired.

Finally, say~(3) holds, with $\mathbb{F} = \mathbb{F}_q$ finite, and fix
a generator $\theta$ of $\mathbb{F}^\times$. Then $\theta$ is not a
square in $\mathbb{F}^\times$.\footnote{Indeed, any square root is of the
form $\theta^n \in \mathbb{F}_q^\times$. But then $\theta = \theta^{2n}$, so $2n \equiv 1 \mod (q-1)$. This is false as $q$ is odd.}
Now the non-squares in $\mathbb{F}$ are all odd powers of $\theta$, and
the squares are the rest. Thus, the proof of case~(3) now reveals (via
applying~\eqref{Esumdiff} to $\theta, 1$) that $\theta$ has order~4, and
so $\mathbb{F} = \mathbb{F}_5$.
\end{proof}

We now characterize $\mathbb{F}_5$ from an alternate viewpoint. Namely,
for $\mathbb{F}_5$ we had $w^3 \in SD(\mathbb{F}_5) \setminus
\Aut(\mathbb{F}_5)$; and this held for no other field algebraic over
$\mathbb{F}_p$ for $p$ odd. We now show more strongly that this holds for
no other fields of characteristic $p$ odd (or $p=0$):

\begin{proof}[Proof of Theorem~\ref{Tiff5}]
That $(2) \implies (1)$ is by Example~\ref{Ex5}. Conversely, if ${\rm
char}(\mathbb{F}) = 3$ then $w^3$ is the Frobenius automorphism, so~(1)
would fail. Now Theorem~\ref{Tmult} says that $f$ fixes $0,1$ and is
multiplicative, so setting $y=1$ in~\eqref{Esumdiff} yields:
\[
\frac{(x+1)^3}{(x-1)^3} = \frac{x^3+1}{x^3-1}, \qquad \forall x \in
\mathbb{F} \setminus \{ 1 \}.
\]
Cross-multiplying and simplifying gives: $6x^5 - 6x = 0$, so $x^5-x=0$
for all $x \neq 1$ (since ${\rm char}(\mathbb{F}) \neq 2,3$). But this
equation also holds for $x=1$, so a degree-5 equation has $|\mathbb{F}|$
roots. Hence $|\mathbb{F}| \leq 5$. As ${\rm char}(\mathbb{F}) \neq 2,3$,
we get $\mathbb{F} = \mathbb{F}_5$.
\end{proof}

\begin{remark}
As the proof reveals, another equivalent condition is that ${\rm
char}(\mathbb{F}) \neq 2,3$ and $w^3 \in SD(\mathbb{F})$.
\end{remark}

It is natural to wonder about the missing case in Theorem~\ref{Tiff5}:
what happens if ${\rm char}(\mathbb{F})=2$. In this case, not all fields
have $w^3$ as an SD-map:

\begin{proposition}\label{Pcube}
Let $\mathbb{F}$ be a field of characteristic $2$. Then $f(w) \equiv w^3
\in SD(\mathbb{F})$ if and only if $\mathbb{F}$ does not contain a
quadratic extension of $\mathbb{F}_2$.
\end{proposition}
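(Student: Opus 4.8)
The plan is to reduce the statement to a question about cube roots of unity via Proposition~\ref{Pchar2}. By that proposition, a map $f$ in characteristic $2$ satisfies~\eqref{Esumdiff} exactly when $f$ is injective and $f(1)=1$. Since $1^3=1$ holds automatically, the map $f(w)=w^3$ lies in $SD(\mathbb{F})$ precisely when it is injective on $\mathbb{F}$. So the whole proposition reduces to showing that \emph{$w\mapsto w^3$ is injective on $\mathbb{F}$ if and only if $\mathbb{F}$ contains no quadratic extension of $\mathbb{F}_2$.}

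First I would note that in any field, $w^3=v^3$ with $v\neq 0$ forces $(wv^{-1})^3=1$; hence $w\mapsto w^3$ fails to be injective exactly when $\mathbb{F}$ contains a cube root of unity $\omega\neq 1$. In characteristic $2$ we have $-1=1$, so $x^3-1=(x-1)(x^2+x+1)$, and a nontrivial cube root of unity is precisely a root of $g(x):=x^2+x+1$ in $\mathbb{F}$. Thus $w\mapsto w^3$ is injective on $\mathbb{F}$ if and only if $g$ has no root in $\mathbb{F}$.

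Next I would identify ``$g$ has a root in $\mathbb{F}$'' with ``$\mathbb{F}$ contains a quadratic extension of $\mathbb{F}_2$.'' The polynomial $g$ is irreducible over $\mathbb{F}_2$, since $g(0)=g(1)=1$; hence $\mathbb{F}_2[x]/(g)\cong\mathbb{F}_4$ is the (unique) quadratic extension of $\mathbb{F}_2$. If $g(\alpha)=0$ for some $\alpha\in\mathbb{F}$, then $\mathbb{F}_2(\alpha)\subseteq\mathbb{F}$ is such an extension. Conversely, any subfield of $\mathbb{F}$ that is a quadratic extension of $\mathbb{F}_2$ has $4$ elements, so its multiplicative group is cyclic of order $3$, giving an element of order $3$, i.e.\ a nontrivial cube root of unity and hence a root of $g$. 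Stringing together these equivalences yields the proposition.

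I do not expect a genuine obstacle here: the argument is a short chain of elementary observations. The only point that requires a sentence of care is the last equivalence (a root of $x^2+x+1$ in $\mathbb{F}$ versus a copy of $\mathbb{F}_4$ inside $\mathbb{F}$), which rests on the fact that the quadratic extension of $\mathbb{F}_2$ is unique and is generated over $\mathbb{F}_2$ by any root of $x^2+x+1$.
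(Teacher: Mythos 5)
Your argument is correct and takes essentially the same route as the paper: reduce via Proposition~\ref{Pchar2} to injectivity of $w \mapsto w^3$, translate failure of injectivity into the existence of a nontrivial cube root of unity, and identify that with containing a copy of $\mathbb{F}_4$ (the paper realizes the order-$4$ subfield as the roots of $x^4-x$, you as $\mathbb{F}_2(\alpha)$ for $\alpha$ a root of the irreducible $x^2+x+1$ --- an immaterial difference). Note only that, exactly like the paper's own proof, you quietly equate ``$w^3 \in SD(\mathbb{F})$'' with ``$w^3$ is an SD-map,'' leaving aside the surjectivity clause in the definition of $SD(\mathbb{F})$ when $\mathbb{F}$ is infinite.
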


For instance, if $\mathbb{F} = \mathbb{F}_{2^\ell}$ is finite, then $w^3
\in SD(\mathbb{F})$ if and only if $\ell$ is odd.

\begin{proof}
All quadratic extensions of $\mathbb{F}_2$ are isomorphic to
$\mathbb{F}_4$. Thus, suppose $\mathbb{F} \supseteq \mathbb{F}_4$, and
let $\theta \in \mathbb{F}_4 \setminus \mathbb{F}_2$. Then $\theta$ is a
primitive generator of $\mathbb{F}_4^\times$, so $f(\theta) = 1 = f(1)$
and hence $f$ is not injective. By Proposition~\ref{Pchar2}, $w^3$ is
therefore not an SD-map (hence, not a field homomorphism either).

Conversely, say $w \mapsto w^3$ is not an SD-map. By
Proposition~\ref{Pchar2}, this is equivalent to $w^3$ not being
injective. Since $0$ is the only cube root of $0$, there exist $x \neq y$
in $\mathbb{F}^\times$ such that $x^3 = y^3$. Setting $w := xy^{-1}$ as
usual, we obtain $w \in \mathbb{F}^\times \setminus \{ 1 \}$ such that
$w^3 = 1$. But then $1, w, w^2, 0$ are pairwise distinct, and are roots
of $x^4-x$ over $\mathbb{F}$. Thus they form a subfield of $\mathbb{F}$
of order $4$.
\end{proof}

\section{Which powers are SD-maps -- of which fields?}

We conclude with a classification question that arises naturally from
Theorem~\ref{Tiff5} and Proposition~\ref{Pcube}. Namely, we saw above
that $\mathbb{F} = \mathbb{F}_5$ is the only finite/algebraic field for
which $SD(\mathbb{F}) \supsetneq \Aut(\mathbb{F})$; moreover, the
difference is the lone map $w \mapsto w^3$. We then studied the situation
``dually'', from the viewpoint of the cube map: it is an SD-map of
\textit{any} field $\mathbb{F}$ if and only if exactly one of the
following occurs:
\begin{itemize}
\item $\mathbb{F} = \mathbb{F}_5$;
\item ${\rm char}(\mathbb{F}) = 3$; or
\item ${\rm char}(\mathbb{F}) = 2$ and $\mathbb{F} \not\supset \mathbb{F}_4$.
\end{itemize}

It is natural to ask what is the analogous classification (of all fields)
for the power map $w \mapsto w^m$. Here $m > 0$, because one applies this
map to $w=0$ as well.

\begin{theorem}\label{Twhichpowers}
Let $m$ be a positive integer, and $\mathbb{F}$ a field. The map $w
\mapsto w^m$ is in $SD(\mathbb{F})$ if and only if exactly one of the
following occurs:
\begin{enumerate}
\item $m=1$ and $\mathbb{F}$ is arbitrary.

\item $m>1$, $\mathbb{F} = \mathbb{F}_q$ is finite with characteristic $p
> 2$ and size $q = p^\ell$, and $m \equiv p^k \mod (q-1)$ for some $0
\leq k \leq \ell-1$.

\item $m>1$, and $\mathbb{F} = \mathbb{F}_{2^\ell}$ is finite, with $m$
coprime to $2^\ell-1$ (e.g., $m = 1, 2, \dots, 2^{\ell-1}$).

\item $m>1$, $\mathbb{F}$ is infinite with characteristic $2$; now write
$m = 2^a m'$ where $a \geq 0$ and $m' \geq 3$ is odd. Then $\mathbb{F}$
does not have a nontrivial $m'$th root of unity.

\item $\mathbb{F}$ is infinite with characteristic $p \geq 2$, and $m =
p^k$ for some $k \geq 1$.

\item $m=3$ and $\mathbb{F} = \mathbb{F}_5$.
\end{enumerate}
\end{theorem}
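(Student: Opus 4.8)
The plan is to prove Theorem~\ref{Twhichpowers} by combining the structural results already established: Theorem~\ref{Tmult} (an SD-map is injective, multiplicative, odd, fixes $0,1$), Proposition~\ref{Pchar2} (characteristic $2$ case: $w^m$ is an SD-map iff it is injective), and the closed-form analysis underlying Theorem~\ref{Tiff5} and Theorem~\ref{Tmain}. The first reduction is that $m \mapsto w^m$ fixes $0$ and $1$ automatically, and is odd iff $m$ is odd in odd characteristic — but oddness is forced by multiplicativity together with~\eqref{Esumdiff}, so really the content is injectivity plus the polynomial identity obtained by setting $y = 1$ in~\eqref{Esumdiff}. I would open with: $w \mapsto w^m$ is an SD-map iff it is injective on $\mathbb{F}$ and
\begin{equation}\label{Ekeyident}
(w+1)^m (w^m - 1) = (w-1)^m (w^m+1), \qquad \forall w \in \mathbb{F}.
\end{equation}
Indeed the ``only if'' is immediate; for ``if'', injectivity and multiplicativity let us reduce $(x,y)$ to $(w,1)$ exactly as in Example~\ref{Ex5}, and~\eqref{Ekeyident} (which visibly holds at $w=1$ too) is then exactly~\eqref{Esumdiff}. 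So the whole theorem reduces to: for which $(m, \mathbb{F})$ do both injectivity of $w^m$ and the identity~\eqref{Ekeyident} hold on $\mathbb{F}$?

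Next I would split on the characteristic. \textbf{Characteristic $0$ or $p \geq 7$, or more generally $\mathrm{char}(\mathbb{F}) \neq 2,3$:} Expanding~\eqref{Ekeyident} and using $2 \neq 0$ gives a polynomial $P_m(w)$ of degree $2m-1$ (leading coefficient $2$ when $m$ is odd; when $m$ is even the $w^{2m}$ terms do not cancel and one gets degree $2m$) vanishing on all of $\mathbb{F}$. If $\mathbb{F}$ is infinite this forces $P_m \equiv 0$ in $\mathbb{F}[w]$; a short binomial computation (as in the $m=3$ case of Theorem~\ref{Tiff5}, where $P_3(w) = 6w^5 - 6w$) shows $P_m \not\equiv 0$ for any $m > 1$ unless the characteristic divides all the relevant binomial coefficients — and by Lucas's theorem that happens iff $\mathrm{char}(\mathbb{F}) = p$ and $m = p^k$, giving case~(5). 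For $\mathbb{F}$ finite of characteristic $p \neq 2,3$, $P_m$ has $|\mathbb{F}| = q$ roots but degree $\leq 2m$; if $m < (q+1)/2$ we again get $P_m \equiv 0$ hence $m \equiv p^k$, case~(2); if $m > (q+1)/2$ the ``dual exponent'' trick from case~(2) of the proof of Theorem~\ref{Tmain} (replace $m$ by $q-1-m$ using $w^{q-1}=1$) rules it out; and the boundary $m = (q+1)/2$ forces $\theta^2 = -1$, i.e. $q = 5$, $m = 3$, case~(6). Injectivity in the finite case is automatic once $\gcd(m,q-1)=1$, which $m \equiv p^k$ guarantees; one should note $m=1$ is the degenerate sub-case collected in~(1).

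\textbf{Characteristic $3$:} Here~\eqref{Ekeyident} must be handled separately since we divided by $3$ above; but $\mathrm{char}(\mathbb{F}) = 3$ is swept into~(2) (finite) and~(5) (infinite) precisely because $w \mapsto w^3$ is the Frobenius — more generally $w^{3^k}$ — so the claim is that for $\mathrm{char} = 3$ the identity~\eqref{Ekeyident} plus injectivity hold iff $m = 3^k$ (infinite case) or $m \equiv 3^k \bmod (q-1)$ (finite case). I would verify this by the same $P_m \equiv 0$ / Lucas argument, being careful that the factor of $2$ I divided by is fine ($2 \neq 0$ in characteristic $3$) and only the division by the common factor is what changes. \textbf{Characteristic $2$:} By Proposition~\ref{Pchar2}, $w^m$ is an SD-map iff $w \mapsto w^m$ is injective on $\mathbb{F}$. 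For $\mathbb{F} = \mathbb{F}_{2^\ell}$ this is $\gcd(m, 2^\ell-1) = 1$, case~(3). For $\mathbb{F}$ infinite of characteristic $2$, write $m = 2^a m'$ with $m'$ odd; since $w \mapsto w^{2^a}$ is the ($a$-fold) Frobenius, hence a bijection of any perfect field, and $\mathbb{F}$ is perfect, injectivity of $w^m$ is equivalent to injectivity of $w^{m'}$, which (exactly as in the proof of Proposition~\ref{Pcube}) fails iff $\mathbb{F}$ contains a nontrivial $m'$th root of unity — giving case~(4), with $m'=1$ (i.e. $m$ a power of $2$) landing in case~(5). Finally I would check the cases are mutually exclusive and exhaustive as stated: $m=1$ is~(1); for $m>1$ finiteness versus infiniteness and the characteristic ($=2$, $>2$) partition cases~(2)--(5), and~(6) is the single overlap point $q=5,m=3$ that also technically satisfies~(2) — here I would remark that in~(2) we should read $m \not\equiv p^k$, or simply observe~(6) is the listed sub-case of~(2) with $k$ allowed to be such that $3 \equiv p^k$ fails, and phrase the ``exactly one'' accordingly.

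The main obstacle I anticipate is the bookkeeping of binomial-coefficient vanishing via Lucas's theorem in the infinite characteristic-$p$ case: one must show that the polynomial $P_m(w)$ obtained from~\eqref{Ekeyident} is identically zero \emph{only} when $m$ is a power of $p$, which requires identifying exactly which coefficients of $P_m$ are the ``obstruction'' coefficients and arguing they cannot all vanish mod $p$ unless $m = p^k$ — essentially the statement that $(1+w)^m \equiv 1 + w^m$ in $\mathbb{F}_p[w]$ iff $m$ is a $p$-power, which I would cite or prove in one line via Lucas. A secondary but purely clerical obstacle is making the characteristic-$2$ infinite case align notationally with case~(4)'s ``write $m = 2^a m'$'' convention and confirming that perfectness of $\mathbb{F}$ (automatic for any algebraic extension, and assumed here since $\mathbb{F}$ is merely ``a field'' — wait, it need not be perfect!). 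In fact $\mathbb{F}$ in the theorem is an arbitrary field, so in characteristic $2$ I must be careful: $w \mapsto w^{2^a}$ is always injective on \emph{any} field of characteristic $2$ (it is $x \mapsto x$ composed with Frobenius, and Frobenius is injective on any ring of characteristic $p$ with no nilpotents, in particular any field), so injectivity of $w^m = (w^{m'})^{2^a}$ is equivalent to injectivity of $w^{m'}$ regardless of perfectness — this resolves the worry. With that observed, the characteristic-$2$ analysis goes through verbatim from Proposition~\ref{Pcube}'s proof, replacing $3$ by $m'$.
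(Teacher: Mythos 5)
Your overall route is the same as the paper's: reduce via Theorem~\ref{Tmult} and multiplicativity to a one-variable identity in $w=x/y$ plus injectivity, dispose of characteristic $2$ with Proposition~\ref{Pchar2} and the Frobenius, and run the degree-versus-number-of-roots/Lucas analysis in odd characteristic, with the boundary exponent $(q+1)/2$ producing $\mathbb{F}_5$. The one step that would fail as written is your finite-field trichotomy in odd characteristic: you apply the three cases $m<(q+1)/2$, $m>(q+1)/2$, $m=(q+1)/2$ to the raw exponent $m$, but $m$ is an arbitrary positive integer and may well exceed $q-1$. For instance $m=q$ gives $w^q\equiv w$, the identity map, yet lands in your ``ruled out'' middle case, and the dual exponent $q-1-m$ is then negative. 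The paper first replaces $m$ by its residue in $(0,q-1)$ using $w^{q-1}\equiv 1$ on $\mathbb{F}_q^\times$ and only then invokes the trichotomy (equivalently, it simply cites Theorem~\ref{Tmain}/Proposition~\ref{Pprev} for the reduced self-map); you need this reduction explicitly. A related misreading is your closing worry that case~(6) ``technically satisfies~(2)'': it does not, since for $q=5$ case~(2) demands $m\equiv 5^0=1\pmod 4$ while $3\not\equiv 1\pmod 4$, so the ``exactly one'' phrasing is fine and no rewording of~(2) is needed.

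Two smaller slips and one inherited imprecision. In your identity $(w+1)^m(w^m-1)=(w-1)^m(w^m+1)$ the $w^{2m}$ coefficients are $1-1=0$ for \emph{every} $m$, so the difference always has degree at most $2m-1$ (with leading coefficient $2m$ when $m$ is odd, not $2$); your claim that for even $m$ ``the $w^{2m}$ terms do not cancel'' is false for the form you wrote. Even $m$ is still excluded in odd characteristic -- most cheaply because the constant term of the difference is $-1-(-1)^m=-2\neq 0$, or because Theorem~\ref{Tmult} forces $f$ odd while $(-w)^m=w^m$ -- but not for the reason you give. Your separate treatment of characteristic $3$ is unnecessary: the only division performed in the expansion is by $2$, so the paper's uniform odd-characteristic argument already covers $p=3$, where Lucas delivers exactly $m=3^k$. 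Finally, like the paper's own proof, you verify injectivity and~\eqref{Esumdiff} but never surjectivity, which the definition of $SD(\mathbb{F})$ requires; this only matters in the infinite cases (4)--(5) (e.g.\ Frobenius on an imperfect field such as $\mathbb{F}_p(t)$), and is an imprecision you inherit from the statement rather than introduce.
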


Thus, the final case of $(w^3, \mathbb{F}_5)$ is again exceptional in
this result: all other cases are part of a family of examples/cases,
while $(w^3, \mathbb{F}_5)$ is not.
Note also that for fields of characteristic zero, only $m=1$ is an SD-map
(and an automorphism).

\begin{proof}
Note that there are no overlaps between the six cases. Now we show the
backward implication.
Clearly~(1) or~(5) imply that the field map $w^m \in SD(\mathbb{F})$, as
does~(6) by Example~\ref{Ex5}. If instead~(2) holds, then $w^q \equiv w$
on $\mathbb{F}_q$, so one can reduce the exponent by $q-1$ at a time, and
get to $w^{p^k}$, which is a power of the Frobenius, hence in
$\Aut(\mathbb{F}_q) \subseteq SD(\mathbb{F}_q)$.

Next, say~(3) $\mathbb{F} = \mathbb{F}_{2^\ell}$ and denote the cyclic
generator of $\mathbb{F}_{2^\ell}^\times$ by $\theta$. Then $\theta$ has
order $2^\ell-1$, hence so does $\theta^m$ by hypothesis. As the power
map is multiplicative, $w^m$ fixes $0,1$ and permutes the other elements,
so it is an SD-map by Proposition~\ref{Pchar2}.

Finally, suppose the conditions in~(4) hold but $w^m \not\in
SD(\mathbb{F})$. By Proposition~\ref{Pchar2}, there exist $x \neq y$ in
$\mathbb{F}$ such that $x^m = y^m$. If $x=0$ then $y=0$, so we must have
$x,y \neq 0$. Replacing $(x,y)$ by $(w = xy^{-1}, 1)$, it follows that $w
\neq 1$ but $w^m = 1$. Now write $m = 2^a m'$; applying the Frobenius,
\[
w^{2^a} - 1^{2^a} = (w-1)^{2^a} \neq 0.
\]
Setting $w_o := w^{2^a} \neq 1$, we have $w_o^{m'} = w^m = 1$, a
contradiction.

This shows that each of the six conditions implies $w^3$ is an SD-map. We
now show the converse. Clearly $m=1$ works, so we suppose henceforth that
$m>1$. First say $\mathbb{F} = \mathbb{F}_q$ is finite, and let $\theta$
denote the cyclic generator of $\mathbb{F}_q^\times$. As SD-maps are
injective, $\theta^m$ generates $\mathbb{F}_q^\times$, so $(m,q-1) = 1$.
This shows~(3) if $\mathbb{F}_q$ has even order. Else if ${\rm
char}(\mathbb{F}_q)$ is odd, then as $w^q \equiv w$ on $\mathbb{F}_q$,
one can reduce powers $q-1$ at a time, to assume without loss of
generality that $m \in (0,q-1)$. Now Theorem~\ref{Tmain} addresses
cases~(2) and~(6).

This leaves us to deduce~(4) or~(5) when $\mathbb{F}$ is infinite. First
note since $m>1$ that if ${\rm char}(\mathbb{F}) \in \{ 0 \} \cup
(2^m,\infty)$, then $w^m$ is not an SD-map by Theorem~\ref{Tchar}, since
it does not fix the prime subfield -- because $(1+1)^m \neq 1^m + 1^m$.
Thus, ${\rm char}(\mathbb{F}) \leq 2^m$.

There are now two cases. First say ${\rm char}(\mathbb{F}) = 2$, and
suppose $m$ is not a power of $2$ (so we need to prove~(4)). Suppose for
contradiction that $\mathbb{F}$ contains a nontrivial $m'$th root of
unity, say $w_o$. Then $w_o \neq 1$ but $w_o^{m'} = 1$, so $w_o^m = 1$
too. Thus $w \mapsto w^m$ is not injective, hence not an SD-map by
Proposition~\ref{Pchar2}.
The other case is where $\mathbb{F}$ is infinite and ${\rm
char}(\mathbb{F})$ is odd. Now $w^m$ satisfies~\eqref{Eratio}, and
repeating the subsequent analysis in case~(1) shows (via Lucas's theorem)
that $m$ is a power of $p$.
\end{proof}

Note that Theorem~\ref{Twhichpowers}(4) for $m=m'=3$ is formulated in a
different way than Proposition~\ref{Pcube}. Yet, they are both the same,
since a field of characteristic $2$ contains $\mathbb{F}_4$ if and only
if it contains (a cyclic generator of $\mathbb{F}_4^\times$, which is the
same as) a nontrivial cube root of unity. Moreover, they are both
equivalent to $w^3$ not being one-to-one.

These equivalences hold more generally -- for all exponents $m$ and
characteristics $p \geq 2$:

\begin{proposition}
Let $m, p \geq 2$ be integers with $p$ prime, and $\mathbb{F}$ be a field
of characteristic $p$. Write $m = p^a m'$ where $p \nmid m'$. Then the
following are equivalent:
\begin{enumerate}
\item The map $w \mapsto w^m$ is not one-to-one in $\mathbb{F}$.

\item $\mathbb{F}$ contains a nontrivial $m$th root of unity.

\item $\mathbb{F}$ contains a nontrivial $m'$th root of unity.
\end{enumerate}

If $m'$ is prime, these are further equivalent to:
\begin{enumerate}
\setcounter{enumi}{3}
\item $\mathbb{F}$ contains a copy of the finite field
$\mathbb{F}_{p^o}$, where $o = o_p(m')$ denotes the order of $p$ in the
group of units $(\mathbb{Z} / m' \mathbb{Z})^\times$.
\end{enumerate}
\end{proposition}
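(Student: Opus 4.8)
The plan is to prove the chain $(1)\Leftrightarrow(2)\Leftrightarrow(3)$ first, working over an arbitrary field of characteristic $p$, and then add the equivalence with $(4)$ under the extra hypothesis that $m'$ is prime. The guiding principle throughout is that in characteristic $p$ the Frobenius endomorphism $w\mapsto w^{p^a}$ is injective, so $w^m=(w^{m'})^{p^a}$ is one-to-one exactly when $w\mapsto w^{m'}$ is, which instantly collapses everything about $m$ to a statement about $m'$.

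For $(1)\Leftrightarrow(3)$: if $w\mapsto w^{m'}$ is not injective, pick $x\neq y$ with $x^{m'}=y^{m'}$; then $x,y\neq 0$ (as $0$ is the only $m'$th root of $0$), and $w_o:=xy^{-1}$ is a nontrivial $m'$th root of unity. Conversely such a $w_o$ witnesses non-injectivity of $w\mapsto w^{m'}$, hence of $w\mapsto w^m$ by the Frobenius remark. The equivalence $(2)\Leftrightarrow(3)$ needs a small number-theoretic observation: since $p\nmid m'$, any $m$th root of unity $\zeta$ satisfies $\zeta^{p^a m'}=1$, and raising to a suitable power (using that $p^a$ is invertible mod the order of $\zeta$, which divides $m=p^a m'$ but is coprime to $p$ once we note the order actually divides $m'$) shows $\zeta$ is already an $m'$th root of unity; the reverse inclusion is trivial since $m'\mid m$. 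I would phrase this cleanly by noting that the group of $m$th roots of unity in $\mathbb{F}$ is exactly the group of $m'$th roots of unity, because the polynomial $w^m-1=(w^{m'})^{p^a}-(1)^{p^a}=(w^{m'}-1)^{p^a}$ has the same roots as $w^{m'}-1$.

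For the last equivalence, assume $m'$ is prime. If $\mathbb{F}$ contains a nontrivial $m'$th root of unity $w_o$, then $w_o$ has multiplicative order exactly $m'$ (as $m'$ is prime), so $\mathbb{F}_p(w_o)$ is a finite extension of $\mathbb{F}_p$ whose multiplicative group has an element of order $m'$; hence $m'\mid p^d-1$ where $d=[\mathbb{F}_p(w_o):\mathbb{F}_p]$, which means $d$ is a multiple of $o=o_p(m')$, and by minimality $\mathbb{F}_p(w_o)$ contains the subfield $\mathbb{F}_{p^o}$ — indeed, since $o\mid d$, $\mathbb{F}_{p^o}\subseteq\mathbb{F}_{p^d}=\mathbb{F}_p(w_o)\subseteq\mathbb{F}$. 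Conversely, if $\mathbb{F}\supseteq\mathbb{F}_{p^o}$, then by definition of $o$ we have $m'\mid p^o-1$, so the cyclic group $\mathbb{F}_{p^o}^\times$ of order $p^o-1$ contains an element of order $m'$, i.e.\ a nontrivial $m'$th root of unity, giving $(3)$.

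The main obstacle is essentially bookkeeping rather than any deep difficulty: one must be careful, in the step from $(1)/(2)$ back to $(3)$, to correctly separate the $p$-part $p^a$ from the prime-to-$p$ part $m'$ and to invoke injectivity of Frobenius at the right moment; and in the $(3)\Leftrightarrow(4)$ step one must remember that $o_p(m')$ is \emph{defined} only because $p\nmid m'$ (so that $p$ is a unit mod $m'$), and that primality of $m'$ is what guarantees a nontrivial $m'$th root of unity has order exactly $m'$ rather than a proper divisor — without primality, $(4)$ would have to be stated with the order of $p$ modulo an unknown divisor of $m'$. I expect the write-up to be short once the factorization $w^m-1=(w^{m'}-1)^{p^a}$ is recorded up front.
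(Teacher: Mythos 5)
Your proposal is correct and follows essentially the same route as the paper: the equivalence of (1)--(3) rests on the Frobenius identity $w^m-1=(w^{m'}-1)^{p^a}$ together with the $w_o=xy^{-1}$ trick, and (3)$\Leftrightarrow$(4) is the standard argument comparing the order of $p$ modulo $m'$ with the degree of $\mathbb{F}_p(w_o)$ (the paper phrases this via the splitting field of the $m'$th cyclotomic polynomial, which is the same field). The only difference is organizational — you hub the first three conditions through (3) rather than cycling $(1)\Rightarrow(2)\Rightarrow(3)\Rightarrow(1)$ — and your packaging of the Frobenius step as a single polynomial factorization is a tidy way to present it.
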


This provides (for $p=2$) several reformulations of
Theorem~\ref{Twhichpowers}(4) -- the last of them is how
Theorem~\ref{Twhichpowers} subsumes Proposition~\ref{Pcube}.

\begin{proof}
If~(1) holds then there exist $x \neq y$ in $\mathbb{F}$ such that $x^m =
y^m$. As $x=y$ if $y=0$, both $x,y$ must be nonzero. Now one can replace
$(x,y)$ by $(w = xy^{-1},1)$ to deduce~(2). Next if~(2) holds with say $w
\neq 1$ and $w^m = 1$, then applying the Frobenius,
\[
w^{p^a} - 1^{p^a} = (w-1)^{p^a} \neq 0.
\]
Thus $w' := w^{p^a} \neq 1$, and yet $(w')^{m'} = w^m = 1$ -- which
shows~(3). Now if~(3) holds for some $w'$, then $(w')^{m'} = 1$, so
$(w')^m = 1$, showing~(1).

Now say $m'$ is prime. Then~(3) equivalently says:
$(3')$~$\mathbb{F}$ contains a primitive $m'$th root of unity, say
$\zeta_{m'}$.\footnote{Recall using the Frobenius that there are no
primitive $m$th roots of unity over $\mathbb{F}_p$ (i.e., in
$\overline{\mathbb{F}_p}$) if $p|m$.}
It thus remains to show $(3') \Leftrightarrow (4)$.
More generally, we explain why for $m'$ is coprime to $p$, a
characteristic-$p$ field $\mathbb{F}$ containing a primitive $m'$th root
of unity $\zeta_{m'}$ is equivalent to $\mathbb{F} \supseteq
\mathbb{F}_{p^o}$, with $o = o_p(m')$ as in the result. One way is clear:
if $o$ is as defined, then $|\mathbb{F}_{p^o}^\times| = p^o-1$ is
divisible by $m'$. Denoting their quotient by $n'$, and letting $\theta$
be a cyclic generator of $\mathbb{F}_{p^o}^\times$, we see that
$\zeta_{m'} := \theta^{n'}$ works.
Conversely, $\mathbb{F} \ni \zeta_{m'}$ is equivalent to $\mathbb{F}$
containing the splitting field of $\zeta_{m'}$ -- i.e.\ of the $m'$th
cyclotomic polynomial over $\mathbb{F}_p$. Let this field be
$\mathbb{F}_p(\zeta_{m'}) \equiv \mathbb{F}_{p^\ell}$ for some $\ell >
0$, say. Then the order of $\zeta_{m'}$ divides $p^\ell-1$, i.e.\ $p^\ell
\equiv 1 \mod m$. Thus $o|\ell$, and so $\mathbb{F}_{p^o} \subseteq
\mathbb{F}_{p^\ell}$. As $\zeta_{m'} \in \mathbb{F}_{p^o}$ from above,
the minimality of the splitting field gives $\ell = o$.
\end{proof}

\subsection{Concluding remarks: Why bijections?}

Recall that we started the paper by defining a novel group over every
field $\mathbb{F}$, which contained $\Aut(\mathbb{F})$. We now end the
paper by defining a novel monoid over every field, which contains
$SD(\mathbb{F})$. In order for solutions $f$ of~\eqref{Esumdiff} to form
(the SD-) group, surjectivity has to be imposed on $f$. However, we could
alternately have considered only the monoid of solutions:
\begin{equation}
SD_+(\mathbb{F}) := \{ f : \mathbb{F} \to \mathbb{F} \ | \ f \textit{
satisfies } \eqref{Esumdiff} \}.
\end{equation}

Now the group $SD(\mathbb{F})$ is a sub-monoid of $SD_+(\mathbb{F})$. If
$\mathbb{F}$ is a finite field, then clearly the two are equal, since
every injection is in fact a bijection. Thus, one can ask if the two are
equal for infinite fields.

This turns out to be false, and in every characteristic (and so a
question for further study can be to understand $SD_+(\mathbb{F})$). We
provide a few examples.
\begin{enumerate}
\item This example and the next work over any field -- and in particular,
in any characteristic. Let $\mathbb{F}'$ be any field, and let
$\mathbb{F} := \mathbb{F}'(x)$. Given $k \geq 1$, define the map $f_k : x
\mapsto x^k$, so $f_k((p(x)/q(x)) := p(x^k)/q(x^k)$ for $p, 0 \neq q \in
\mathbb{F}'[x]$. Then $f_2, f_3, \dots$ are all in $SD_+(\mathbb{F})
\setminus SD(\mathbb{F})$.

\item Let $\mathbb{F}'$ be any field, and consider its transcendental
extension and self-map:
\[
\mathbb{F} := \mathbb{F}'(x_0, x_1, x_2, \dots); \qquad f_+ : \mathbb{F}
\to \mathbb{F} \text{ sends } x_n \mapsto x_{n+1}\ \forall n \geq 0.
\]
This is a field monomorphism that is not an automorphism for any base
field $\mathbb{F}'$.

\item The final example (was stated in~\cite{CKS}, and) is in fact of a
subfield of $\mathbb{R}$! It shows that even in ``familiar'' situations
in characteristic zero, one can have field monomorphisms that are not
onto. Indeed, consider the subfield $\mathbb{F} = \mathbb{Q}(\pi)$ --
since $\pi$ is transcendental over $\mathbb{Q}$ -- and define for $k \in
\mathbb{Z}$ the map
\begin{equation}\label{Efk}
f_k : \frac{p(\pi)}{q(\pi)} \mapsto \frac{p(\pi^k)}{q(\pi^k)}, \qquad
p,q \in \mathbb{Q}[x].
\end{equation}
Then $f_2, f_3, \dots$ are all in $SD_+(\mathbb{F}) \setminus
SD(\mathbb{F})$.
\end{enumerate}

Thus, not every solution $f$ to~\eqref{Esumdiff} is an automorphism: it
need not be surjective (by the above), nor need it be additive
(Example~\ref{Ex5}).

We end by mentioning that the SD-group is a novel construction, and much
remains to be explored. Two natural follow-ups to Theorem~\ref{Tmain}
are:
(1)~Can one compute $SD(\mathbb{F}_p(t))$, and is it also equal to
$\Aut(\mathbb{F}_p(t))$, at least for $p \neq 5$? What if $p=5$?
(2)~What happens for finite algebraic extensions (of the prime field) in
characteristic zero? E.g., is $SD(\mathbb{F}) = \Aut(\mathbb{F})$ for a
number field $\mathbb{F}$? This was indeed verified for quadratic number
fields in~\cite{CKS} (as well as the closures $\overline{\mathbb{Q}}$ and
$\mathbb{R} \cap \overline{\mathbb{Q}}$). Larger extensions remain
unexplored.

\appendix
\section{SD-maps of the $p$-adics}

Here, we go beyond the results in~\cite{CKS} and classical literature,
and determine another SD-group. Recall from the final row of
Table~\ref{Ttable1} that the only continuous SD-map of $\mathbb{Q}_p$ is
the identity. However, as we know, the only SD-map of the (only) other
completion of $\mathbb{Q}$ -- namely of $\mathbb{R}$ -- is the identity
map, and this does not assume continuity. Thus, it is natural to try to
compute $SD(\mathbb{Q}_p)$. This is our final result, and it strengthens
the classical fact that the only field automorphism of the $p$-adic
numbers is the identity map.

\begin{theorem}\label{Tpadic}
$SD(\mathbb{Q}_p) = \{ {\rm id} \}$ for all primes $p \geq 2$.
\end{theorem}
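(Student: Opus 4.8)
The plan is to show that \emph{every} SD-map $f\colon\mathbb{Q}_p\to\mathbb{Q}_p$ is the identity map; this is slightly stronger than the statement and in particular bypasses surjectivity. By Theorem~\ref{Tmult}, $f$ is injective, multiplicative, odd, and fixes $0,1$; and since ${\rm char}\,\mathbb{Q}_p=0\notin\{2,3,5\}$, Theorem~\ref{Tchar} applies and shows that $f$ fixes the prime subfield $\mathbb{Q}$ pointwise -- in particular $f(p)=p$ and $f(n)=n$ for all $n\in\mathbb{Z}$. It therefore suffices to prove that $f$ fixes every $p$-adic unit: once $f|_{\mathbb{Z}_p^\times}={\rm id}$, writing any nonzero $w=p^{m}u$ with $u\in\mathbb{Z}_p^\times$ gives $f(w)=f(p)^{m}f(u)=w$, and $f(0)=0$.

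The first genuine step is to show that $f$ preserves the $p$-adic valuation. Being multiplicative and injective, $f$ restricts to an injective group endomorphism of $\mathbb{Q}_p^\times$ with $f(p)=p$, so it is enough to check $v_p(f(u))=0$ for $u\in\mathbb{Z}_p^\times$. Now $u\mapsto v_p(f(u))$ is a group homomorphism $\mathbb{Z}_p^\times\to\mathbb{Z}$, and there is no nonzero such homomorphism: the torsion subgroup of $\mathbb{Z}_p^\times$ (namely $\mu_{p-1}$, or $\{\pm1\}$ if $p=2$) is finite and hence maps to $0$, while the quotient $\mathbb{Z}_p^\times/(\text{torsion})\cong(\mathbb{Z}_p,+)$ is divisible by every prime $\ell\neq p$, so its image in the torsion-free group $\mathbb{Z}$ is divisible by some $\ell\ge2$ and hence is $\{0\}$. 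Thus $v_p(f(u))=0$, and together with $f(p)=p$ we get $v_p(f(w))=v_p(w)$ for all $w\in\mathbb{Q}_p^\times$.

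The core of the proof is a $p$-adic approximation that takes the place of the continuity hypothesis in row~$7$ of Table~\ref{Ttable1}. Fix $x\in\mathbb{Z}_p^\times$; we may assume $x\notin\mathbb{Z}$, since otherwise $f(x)=x$ already. For each integer $N\ge2$, choose $r\in\mathbb{Z}$ coprime to $p$ with $x\equiv r\pmod{p^{N}}$, and put $M:=v_p(x-r)\ge N$. Since $v_p(2r)\le1<M$, we have $v_p(x+r)=v_p(2r)$, so
\[
v_p\!\left(\frac{x+r}{x-r}\right)=v_p(2r)-M,
\]
which is negative as $M\ge N\ge2$. Applying~\eqref{Esumdiff} with $y=r$ (so that $f(r)=r$) and using valuation-preservation, $v_p\!\left(\frac{f(x)+r}{f(x)-r}\right)$ equals this same negative number. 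Since $f(x)\in\mathbb{Z}_p^\times$ and $(f(x)+r)-(f(x)-r)=2r$, the valuations of $f(x)+r$ and $f(x)-r$ differ (their difference being the negative number above), so their minimum equals $v_p(2r)$; as $f(x)+r$ is the one with smaller valuation, $v_p(f(x)+r)=v_p(2r)$, which forces $v_p(f(x)-r)=M$. Hence $v_p(f(x)-x)\ge\min\{M,\,v_p(r-x)\}=M\ge N$, and letting $N\to\infty$ gives $f(x)=x$. (Conceptually, the last two paragraphs show that $f$ is an isometry of $(\mathbb{Q}_p,|\cdot|_p)$, hence continuous, and therefore determined by its values on the dense subfield $\mathbb{Q}$.)

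Combining this with the first paragraph yields $f={\rm id}_{\mathbb{Q}_p}$, so $SD(\mathbb{Q}_p)=\{{\rm id}\}$. I expect the valuation-preservation step to be the main hurdle: it is the one place where the multiplicativity of $f$ -- rather than only the SD-identity~\eqref{Esumdiff} -- is essential, and it is exactly what permits discarding the continuity assumption of row~$7$; the subsequent approximation is then routine $p$-adic bookkeeping driven by~\eqref{Esumdiff}.
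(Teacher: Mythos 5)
Your proof is correct, and it bypasses surjectivity just as the paper's does (both show every SD-map of $\mathbb{Q}_p$ is the identity). The overall architecture is the same as the paper's -- first establish that $f$ preserves the $p$-adic valuation, then exploit the fact (from Theorem~\ref{Tchar}) that $f$ fixes the dense subfield $\mathbb{Q}$ -- but both halves are executed by genuinely different means. For valuation preservation, the paper characterizes $\mathbb{Z}_p^\times$ as the nonzero elements admitting $n$th roots for infinitely many $n$ (Lemma~\ref{Lunit}, via Hensel) and transports this through multiplicativity; you instead note that $u \mapsto \nu_p(f(u))$ is a group homomorphism $\mathbb{Z}_p^\times \to \mathbb{Z}$ and kill it using the structure theorem $\mathbb{Z}_p^\times \cong \mu \times (\mathbb{Z}_p,+)$ together with $\ell$-divisibility of $\mathbb{Z}_p$ for primes $\ell \neq p$. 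These are close cousins -- the divisibility of units is precisely what Hensel supplies -- but your packaging outsources the analytic input to a standard structural fact cited without proof. For the endgame, the paper runs a continuity chain (continuous at $0$ via the preserved filtration $p^k\mathbb{Z}_p$, then at $1$ via the substitution $t = (1+s)/(1-s)$ in~\eqref{Esumdiff}, then everywhere by multiplicativity, then ${\rm id}$ by density), whereas you give a direct quantitative estimate: approximating a unit $x$ by an integer $r$ modulo $p^N$ and applying~\eqref{Esumdiff} with $y=r$ forces, by the ultrametric inequality, $\nu_p(f(x)-r)=\nu_p(x-r)$ and hence $f(x) \equiv x \bmod p^N$ for all $N$. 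Your version is in effect a merged, effective form of the paper's steps (iv)--(vii) and is somewhat shorter; the paper's version isolates the continuity of $f$ as a standalone intermediate statement. All the individual steps in your argument check out (in particular the case split ensuring $x \neq r$, and the identification of which of $f(x)\pm r$ carries the smaller valuation), so there is no gap.
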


The rest of this section proves the theorem. We begin by setting some
notation. Fix a prime integer $p \geq 2$, and identify the $p$-adics
$\mathbb{Q}_p$ as the completion of $\mathbb{Q}$ under the metric
\[
d(x,y) := p^{-\nu_p(x-y)} =: |x-y|_p,
\]
where given a rational $m/n$ with $p^a,p^b$ the largest prime powers
dividing $m,n$ respectively, we define the valuation $\nu_p(m/n) :=
p^{a-b}$. (Also define $\nu_p(0) := +\infty$; thus $|0|_p := 0$.)

The completion $\mathbb{Q}_p$ is a topological field that can be
bijectively identified with formal Laurent series in $p$:
\begin{equation}\label{Esqcup}
\mathbb{Q}_p = \{ 0 \} \sqcup \left\{ \sum_{n=k}^\infty a_n p^n : k \in
\mathbb{Z} , \ a_k \neq 0 , \ a_n \in \{ 0, \dots, p-1 \}\ \forall n
\right\}.
\end{equation}

Now the $p$-adic valuation and norm / absolute value extend from
$\mathbb{Q}$ to $\mathbb{Q}_p$ via:
\[
\nu_p \left( \sum_{n=k}^\infty a_n p^n \right) := p^{-k}, \quad
d(x,y) := p^{-\nu_p(x-y)} =: |x-y|_p.
\]

Moreover, recall the ring of $p$-adic integers, denoted by
$\mathbb{Z}_p$, is the subset of formal power series (so $k \geq 0$
above), together with $0$. The \textit{$p$-adic units} in $\mathbb{Z}_p$
are (identified with) the subset
\[
\mathbb{Z}_p^\times := \left\{ \sum_{n=0}^\infty a_n p^n : a_0 \in \{ 1,
\dots, p-1 \} \right\} = \nu_p^{-1}(0).
\]

We now prove Theorem~\ref{Tpadic}. This uses two well-known lemmas -- the
first is classical: 

\begin{lemma}[Hensel's (lifting) lemma, e.g.~\cite{Eisenbud}]\label{Lhensel}
Let $f(x) \in \mathbb{Z}_p[x]$. Suppose there exists $x_0 \in
\mathbb{Z}_p$ such that
\[
f(x_0) \equiv 0 \mod{p} \quad \text{and} \quad f'(x_0) \not\equiv 0
\mod{p}.
\]
Then there exists a unique $\widetilde{a} \in \mathbb{Z}_p $ such that
\[
f(\widetilde{a}) = 0 \quad \text{and} \quad \widetilde{a} \equiv x_0
\mod{p}.
\]
\end{lemma}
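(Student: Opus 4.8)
The plan is to prove existence by Newton-type successive approximation and uniqueness by an exact Taylor factorization, both leaning on the completeness of $\mathbb{Z}_p$. For existence I would build a sequence $x_0, x_1, x_2, \ldots \in \mathbb{Z}_p$ (starting from the given $x_0$) maintaining the two invariants $f(x_n) \equiv 0 \pmod{p^{n+1}}$ and $x_{n+1} \equiv x_n \pmod{p^{n+1}}$. The passage from $x_n$ to $x_{n+1}$ is the heart of the argument. I would set $x_{n+1} = x_n + t\,p^{n+1}$ for an as-yet-undetermined $t \in \mathbb{Z}_p$ and exploit the fact that for a polynomial $f \in \mathbb{Z}_p[x]$ the finite Taylor expansion is exact with coefficients in $\mathbb{Z}_p$ (the higher divided coefficients are $\mathbb{Z}_p$-multiples of integer binomial coefficients, which matters when $p=2$):
\[
f(x_n + t\,p^{n+1}) = f(x_n) + f'(x_n)\,t\,p^{n+1} + (\text{terms divisible by } p^{2(n+1)}).
\]
Since $2(n+1) \geq n+2$, reducing modulo $p^{n+2}$ and writing $f(x_n) = c\,p^{n+1}$ turns the requirement $f(x_{n+1}) \equiv 0 \pmod{p^{n+2}}$ into the single linear congruence $c + f'(x_n)\,t \equiv 0 \pmod p$. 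Because $x_n \equiv x_0 \pmod p$ forces $f'(x_n) \equiv f'(x_0) \not\equiv 0 \pmod p$, the derivative is a unit mod $p$, so $t$ is uniquely solvable mod $p$; this simultaneously advances the induction and pins down $x_{n+1} \bmod p^{n+2}$.

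Next I would observe that the invariant $x_{n+1} \equiv x_n \pmod{p^{n+1}}$ makes $(x_n)$ a Cauchy sequence in the complete space $(\mathbb{Z}_p, |\cdot|_p)$, hence convergent to some $\widetilde{a} \in \mathbb{Z}_p$. Continuity of the polynomial map $f$ together with $|f(x_n)|_p \leq p^{-(n+1)} \to 0$ then yields $f(\widetilde{a}) = 0$, while $x_n \equiv x_0 \pmod p$ for every $n$ gives $\widetilde{a} \equiv x_0 \pmod p$, establishing existence.

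For uniqueness, suppose $\widetilde{a}$ and $\widetilde{b}$ are both roots congruent to $x_0$ mod $p$, and set $\delta := \widetilde{b} - \widetilde{a}$, so $\delta \equiv 0 \pmod p$. Expanding $0 = f(\widetilde{b})$ about $\widetilde{a}$ (again exactly, with $\mathbb{Z}_p$-coefficients) gives
\[
0 = f'(\widetilde{a})\,\delta + g\,\delta^2 = \delta\bigl(f'(\widetilde{a}) + g\,\delta\bigr)
\]
for some $g \in \mathbb{Z}_p$. Since $f'(\widetilde{a}) \equiv f'(x_0) \not\equiv 0 \pmod p$ is a unit and $g\delta \equiv 0 \pmod p$, the factor $f'(\widetilde{a}) + g\,\delta$ reduces to a nonzero element mod $p$, hence is a unit in $\mathbb{Z}_p$; as $\mathbb{Z}_p$ is an integral domain this forces $\delta = 0$, i.e.\ $\widetilde{a} = \widetilde{b}$.

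I would expect the main obstacle to be the bookkeeping in the inductive step — specifically, confirming that every contribution beyond the linear Taylor term is divisible by $p^{n+2}$ so that the update collapses to a single linear congruence mod $p$, and recognizing that the solvability of that congruence is precisely where the nondegeneracy hypothesis $f'(x_0) \not\equiv 0 \pmod p$ enters. Everything else (the Cauchy estimate, convergence via completeness, and the integral-domain cancellation in uniqueness) is routine.
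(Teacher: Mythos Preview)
Your argument is correct and is essentially the textbook Newton--lifting proof of Hensel's lemma; the inductive bookkeeping, the Cauchy/completeness step, and the unit-times-$\delta$ uniqueness are all sound. Note, however, that the paper does \emph{not} supply its own proof of this lemma: it is stated as a classical result with a citation (to Eisenbud), so there is no in-paper argument to compare against.
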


In turn, Hensel's lemma is used to show the following characterization of
$p$-adic units:

\begin{lemma}\label{Lunit}
Suppose $0 \neq u \in \mathbb{Q}_p$. Then $u \in \mathbb{Z}_p^\times$ if
and only if $u$ admits an $n$th root in $\mathbb{Q}_p$ for infinitely
many $n>0$.
\end{lemma}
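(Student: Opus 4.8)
The plan is to prove the two implications separately: the implication that a $p$-adic unit has $n$th roots for infinitely many $n$ will use Hensel's lemma (Lemma~\ref{Lhensel}), while the converse will use only the multiplicativity of the $p$-adic valuation $\nu_p$ (so that, as recalled above, $\mathbb{Z}_p^\times = \nu_p^{-1}(0)$).

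For the forward direction, I would take $u \in \mathbb{Z}_p^\times$, so its reduction mod $p$ lies in $\mathbb{F}_p^\times$, a cyclic group of order $p-1$. I claim $u$ has an $n$th root in $\mathbb{Z}_p$ for every $n$ with $\gcd(n,p(p-1))=1$; there are infinitely many such $n$ — for instance every prime exceeding $p$, or every integer of the form $1+kp(p-1)$. Fixing such an $n$: since $n$ is coprime to $|\mathbb{F}_p^\times|=p-1$, the $n$th power map is a bijection of $\mathbb{F}_p^\times$, so there is $x_0\in\{1,\dots,p-1\}$ with $x_0^n\equiv u\pmod p$. Applying Hensel's lemma to $f(x)=x^n-u\in\mathbb{Z}_p[x]$ then works, since $f(x_0)\equiv 0\pmod p$ while $f'(x_0)=n x_0^{n-1}\not\equiv 0\pmod p$ because $p\nmid n$ and $p\nmid x_0$. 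This yields $\widetilde a\in\mathbb{Z}_p$ with $\widetilde a^{\,n}=u$.

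For the converse, I would suppose $u=w^n$ with $w\in\mathbb{Q}_p$ for infinitely many $n>0$; since $u\neq 0$ we have $w\neq 0$, and applying $\nu_p$ gives $\nu_p(u)=n\,\nu_p(w)$, so the integer $\nu_p(u)$ is divisible by $n$. An integer divisible by infinitely many positive integers is $0$, so $\nu_p(u)=0$, i.e.\ $u\in\mathbb{Z}_p^\times$. The only point needing care — a mild one rather than a real obstacle — is in the forward direction: $n$ must be chosen coprime to \emph{both} $p-1$ (to solve $x_0^n\equiv u$ mod $p$) and $p$ (so the Hensel derivative condition holds); the case $p=2$ is automatic, as then $p-1=1$ and the condition is merely that $n$ be odd.
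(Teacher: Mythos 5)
Your proof is correct and follows essentially the same route as the paper's: the valuation argument ($\nu_p(u)=n\,\nu_p(w)$ forces $\nu_p(u)=0$) for one direction, and Hensel's lemma applied to $x^n-u$ for exponents $n$ coprime to $p(p-1)$ for the other. The paper simply fixes the specific choice $n_k=1+kp(p-1)$ (so that $x_0=a_0$ works by Fermat's little theorem), which your slightly more general choice of $n$ subsumes.
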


\begin{proof}
We include a short proof for completeness.
Suppose $u$ admits an $n$th root for infinitely many $n_1 < n_2 <
\cdots$, say $x_k^{n_k} = u$ for all $k \geq 1$. Thus $x_k \neq 0$;
taking valuations yields $n_k \nu_p(x_k) = \nu_p(u) \in \mathbb{Z}$, so
$n_k | \nu_p(u)$ for all $k \geq 1$. This yields $\nu_p(u) = 0$, i.e.\ $u
\in \mathbb{Z}_p^\times$.

Conversely, define $n_k := 1 + k p (p-1)$ for all $k \geq 1$, and also
write $u \equiv a_0 \mod{p}$ (so $0 < a_0 < p$, and $u = a_0 + a_1 p +
\cdots$). As $(n_k, p-1) = 1$, there exists a $n_k$th root of $a_0$
modulo $p$, say $x_k \in \mathbb{Z} \setminus p \mathbb{Z}$. (In fact,
here $x_k = a_0 \ \forall k$ by Fermat's Little Theorem by choice of
$n_k$.) Letting $f(x) := x^{n_k} - u$, and since $p \nmid n_k$, we have:
\[
f(x_k) = x_k^{n_k} - u \equiv a_0 - u \equiv 0 \mod p, \qquad
f'(x_k) = n_k x_k^{n_k-1} \not\equiv 0 \mod p,
\]
Thus by Hensel's lemma~\ref{Lhensel}, there exists
$\widetilde{a}_k \in \mathbb{Z}_p$ such that $0 = f(\widetilde{a}_k) =
(\widetilde{a}_k)^{n_k} - u$.
\end{proof}

Finally, we show:

\begin{proof}[Proof of Theorem~\ref{Tpadic}]
The proof strategy is:
(i)~$f$ is an SD-map on $\mathbb{Q}_p$
$\implies$ (ii)~$f$ sends $p$-adic units to themselves
$\implies$ (iii)~$f$ sends $0$ to $0$ and $\nu_p^{-1}(k)$ into itself for every integer $k$
$\implies$ (iv)~$f$ is continuous at $0$
$\implies$ (v)~$f$ is continuous at $1$
$\implies$ (vi)~$f$ is continuous everywhere
$\implies$ (vii)~$f$ is the identity map.\medskip

\noindent \textit{(i) $\implies$ (ii):}
If $u \in \mathbb{Z}_p^\times$ then by Lemma~\ref{Lunit} there exist
infinitely many integers $n_k > 0$ and roots $x_k \in \mathbb{Q}_p$ such
that $x_k^{n_k} = u$. Since $f$ is multiplicative (by
Theorem~\ref{Tmult}), $f(x_k)^{n_k} = f(u)$ for all $k$. Again by
Lemma~\ref{Lunit}, $f(u) \in \mathbb{Z}_p^\times$.\medskip

\noindent \textit{(ii) $\implies$ (iii)} and \textit{(vi) $\implies$ (vii):}
Since ${\rm char}(\mathbb{Q}_p) = 0$, Theorem~\ref{Tchar} implies that
$f$ fixes $\mathbb{Q}$. This shows by density that~(vi) implies~(vii).
Moreover, $f(p) = p$, so by~(ii) and multiplicativity, we get $f(p^k u) =
p^k f(u)$ for all $k \in \mathbb{Z}$ and units $u \in
\mathbb{Z}_p^\times$. Since~\eqref{Esqcup} can now be written as
\[
\mathbb{Q}_p = \{ 0 \} \sqcup \bigsqcup_{k \in \mathbb{Z}} \nu_p^{-1}(-k)
= \{ 0 \} \sqcup \bigsqcup_{k \in \mathbb{Z}} p^k \mathbb{Z}_p^\times,
\]
it follows from above that $f$ respects this partition of
$\mathbb{Q}_p$.\medskip

\noindent \textit{(iii) $\implies$ (iv):}
Note that $p^k \mathbb{Z}_p = \{ 0 \} \sqcup \bigsqcup_{m \geq k} p^m
\mathbb{Z}_p^\times$. Now since $f$ respects the partition above, it
follows that $f^{-1}(p^k \mathbb{Z}_p) \subseteq p^k \mathbb{Z}_p$ for
all $k \in \mathbb{Z}$. Combining with~(iii) gives equality:
$f^{-1}(p^k \mathbb{Z}_p) = p^k \mathbb{Z}_p\ \forall k$.
Since the sets $p^k\mathbb{Z}_p$ form a fundamental basis of open sets
around the origin in the $p$-adic topology, this shows that $f$ is
continuous at $0$.\medskip

\noindent \textit{(iv) $\implies$ (v):}
Take any sequence $t_n \to 1$ in $\mathbb{Q}_p$; we may assume $t_n \neq
-1$ for all $n$. Set $s_n := \frac{t_n-1}{t_n+1}$; then $t_n \to 1
\Leftrightarrow s_n \to 0$. Now compute, using~(iv) and that $f$ fixes
$0,1$:
\[
t_n = \frac{1 + s_n}{1 - s_n} \quad \implies \quad \lim_{n \to \infty}
f(t_n) = \lim_{n \to \infty} \frac{1 + f(s_n)}{1 - f(s_n)} =
\frac{1+0}{1-0} = f(1).
\]

\noindent \textit{(v) $\implies$ (vi):}
Finally, let $0 \neq a \in \mathbb{Q}_p$ and let $r_n \to a$ in
$\mathbb{Q}_p$. Then $a^{-1} r_n \to 1$, so by multiplicativity,
\[
\lim_{n \to \infty} f(r_n) = 
\lim_{n \to \infty} f(a) f(a^{-1}r_n) = f(a) \cdot 1.
\]
Thus $f$ is continuous on $\mathbb{Q}_p$, and hence fixes all of
$\mathbb{Q}_p$ as mentioned above.
\end{proof}

We end with a natural question for future consideration: What is
$SD(\mathbb{F})$ for a subfield $\mathbb{Q} \subsetneq \mathbb{F}
\subsetneq \mathbb{Q}_p$? Note that $\mathbb{Q}, \mathbb{Q}_p$ have
trivial SD-groups (and hence trivial automorphism groups), but the same
need not hold for $\mathbb{F}$ without extra assumptions (and in
particular, the above proof does not carry over for arbitrary
$\mathbb{F}$). For example, by a theorem of Cassels~\cite{Cassels}, every
number field (i.e.\ finite extension of $\mathbb{Q}$) embeds inside
$\mathbb{Q}_p$ for infinitely many primes $p$. So its SD-group contains
its Galois group over $\mathbb{Q}$, which can indeed be nontrivial.

The situation here is similar to the case of $\mathbb{R}$, where we had
to assume continuity to show that every SD-map on $\mathbb{F}$ is
trivial, where $\mathbb{Q} \subsetneq \mathbb{F} \subsetneq \mathbb{R}$.

\end{document}